\newcommand\blfootnote[1]{%
  \begingroup
  \renewcommand\thefootnote{}\footnote{#1}%
  \addtocounter{footnote}{-1}%
  \endgroup
}
\newtheorem{theorem}{Theorem}[section]
\newtheorem{prop}[theorem]{Proposition}
\newtheorem{lemma}[theorem]{Lemma}
\newtheorem{defi}[theorem]{Definition}
\newtheorem{kov}[theorem]{Corollary}
\theoremstyle{definition}
\newtheorem{megj}[theorem]{Remark}
\begin{document}

\blfootnote{\it{2000 Mathematics Subject Classification:} Primary: 26A21; Secondary: 28A05, 03E15, 54H05}
\blfootnote{Keywords: Baire classes, graphs of functions, Fr\'echet spaces, Suslin spaces}

\begin{abstract}
Let $X$ be a paracompact topological space and $Y$ be a Banach space. In this paper, we will characterize the Baire-1 functions $f:X\rightarrow{Y}$ by their graph: namely, we will show that $f$ is a Baire-1 function if and only if its graph $gr(f)$ is the intersection of a sequence $(G_n)_{n=1}^{\infty}$ of open sets in $X\times{Y}$ such that for all $x\in{X}$ and $n\in\mathbb{N}$ the vertical section of $G_n$ is a convex set, whose diameter tends to $0$ as $n\rightarrow\infty$. Afterwards, we will discuss a similar question concerning functions of higher Baire classes and formulate some generalized results in slightly different settings: for example we require the domain to be a metrized Suslin space, while the codomain is a separable Fr\'echet space. Finally, we will characterize the accumulation set of graphs of Baire-2 functions between certain spaces.
\end{abstract}

\title{Characterizations and Properties of Graphs of Baire Functions}
\author{Bal\'azs Maga}
\date{}
\maketitle

\section{Introduction}

In \cite{A}, S. J. Agronsky, J. G. Ceder and T. L. Pearson gave an equivalent definition of the real valued Baire class 1 functions defined on a metric space $X$ by characterizing their graph, which we will denote throughout this paper by $gr(f)$. In their article, Theorem 2.2 stated the following:

\begin{prop} Let $X$ be a metric space. Let us call an open set $G\subseteq{X\times\mathbb{R}}$ an open strip if for each $x\in{X}$ the intersection of $G_n$ and $\{x\}\times\mathbb{R}$ is an interval. Let $f:X\rightarrow\mathbb{R}$ be a function. It is Baire-1 if and only if there is a sequence $(G_n)_{n=1}^{\infty}$ of open strips such that $\cap_{n=1}^{\infty}G_n=gr(f)$. \end{prop}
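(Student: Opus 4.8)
The plan is to translate the geometric ``open strip'' condition into a pair of semicontinuity conditions on the boundary functions, and then to invoke the classical fact that a real-valued function on a metric space is Baire-1 if and only if the preimage of every open set is $F_\sigma$. Observe first that if $G$ is an open strip, then for each $x$ its section is an open interval $(\alpha(x),\beta(x))$ (with $\alpha(x)\in[-\infty,\infty)$, $\beta(x)\in(-\infty,\infty]$), and openness of $G$ forces the lower boundary $\alpha$ to be upper semicontinuous and the upper boundary $\beta$ to be lower semicontinuous; conversely any such pair determines an open strip $\{(x,y):\alpha(x)<y<\beta(x)\}$. Thus a sequence of open strips whose intersection equals $gr(f)$ is precisely a pair of sequences $\alpha_n$ (u.s.c.) and $\beta_n$ (l.s.c.) with $\alpha_n(x)<f(x)<\beta_n(x)$, $\sup_n\alpha_n(x)=f(x)$ and $\inf_n\beta_n(x)=f(x)$ for every $x$.

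For the direction ``Baire-1 $\Rightarrow$ strips'', I would start from a sequence of continuous functions $c_n$ converging pointwise to $f$. Since a convergent sequence is bounded at each point, the functions $a_N:=\inf_{n\ge N}c_n$ and $b_N:=\sup_{n\ge N}c_n$ are finite; being an infimum (resp.\ supremum) of continuous functions, $a_N$ is upper semicontinuous and $b_N$ is lower semicontinuous. Because $c_n\to f$ we have $\sup_N a_N=\liminf_n c_n=f=\limsup_n c_n=\inf_N b_N$, and $a_N\le f\le b_N$ pointwise. To make the inequalities strict (so that $f(x)$ genuinely lies inside each section) I would pass to $\alpha_N:=a_N-\tfrac1N$ and $\beta_N:=b_N+\tfrac1N$, which preserves the semicontinuity, preserves $\sup_N\alpha_N=f=\inf_N\beta_N$, and yields $\alpha_N<f<\beta_N$. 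The strips $G_N:=\{(x,y):\alpha_N(x)<y<\beta_N(x)\}$ then satisfy $\bigcap_N G_N=gr(f)$, since for fixed $x$ a real $y$ lies in every section iff $y\ge\sup_N\alpha_N(x)=f(x)$ and $y\le\inf_N\beta_N(x)=f(x)$, i.e.\ iff $y=f(x)$.

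For the converse, given strips with intersection $gr(f)$ I would extract the boundary functions $\alpha_n,\beta_n$ as above, so that $f=\sup_n\alpha_n$ with each $\alpha_n$ u.s.c.\ and $f=\inf_n\beta_n$ with each $\beta_n$ l.s.c. Then for every $a\in\mathbb{R}$ one has $\{f>a\}=\bigcup_n\{\alpha_n>a\}$ and $\{f<a\}=\bigcup_n\{\beta_n<a\}$; each set $\{\alpha_n>a\}=\bigcup_k\{\alpha_n\ge a+\tfrac1k\}$ is $F_\sigma$ because the superlevel sets $\{\alpha_n\ge c\}$ of a u.s.c.\ function are closed, and symmetrically $\{\beta_n<a\}$ is $F_\sigma$. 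Hence $\{f>a\}$ and $\{f<a\}$ are $F_\sigma$ for all $a$, so the preimage under $f$ of any open subset of $\mathbb{R}$ (a countable union of intervals, each a finite intersection of such rays) is $F_\sigma$, and the classical characterization on metric spaces gives that $f$ is Baire-1.

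I do not expect either implication to present a serious obstacle once the reformulation is in place; the only points requiring care are the finiteness and semicontinuity of $a_N,b_N$ together with the strictness correction in the first direction, and the bookkeeping that turns the two semicontinuous families into $F_\sigma$ preimages in the second. The genuinely essential step is the dictionary between open strips and semicontinuous boundary pairs, which reduces the statement to the standard $F_\sigma$-preimage criterion for Baire class $1$.
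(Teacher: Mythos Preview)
Your argument is correct. It is, however, genuinely different from the route taken in the paper (and in the cited source). The paper does not prove Proposition~1.1 directly but cites it from \cite{ACP}, noting that for a general metric domain the original proof already relies on Michael's Selection Theorem; the paper's own proof of the generalization (Theorem~1.3) follows that same line: in the converse direction it passes to the sectionwise closures, checks lower hemicontinuity of the resulting multifunction, and invokes Michael to produce continuous selections converging to $f$; in the forward direction it builds the strips from closed pieces $H(n,k,i)$ carved out of the sets $\{x:\|f_n(x)-f(x)\|<1/k\}$.

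Your approach instead exploits the order structure of $\mathbb{R}$: you read an open strip as a pair $(\alpha,\beta)$ with $\alpha$ u.s.c.\ and $\beta$ l.s.c., reduce the problem to showing that $\{f>a\}$ and $\{f<a\}$ are $F_\sigma$, and then quote the Lebesgue--Hausdorff characterization of Baire class~$1$. This is shorter and avoids Michael's theorem entirely, which is a real gain for the specific statement at hand. The trade-off is that it does not export to the Banach-space setting of Theorem~1.3: once the codomain is no longer linearly ordered there are no boundary functions, and the paper's selection-theoretic machinery becomes necessary. In the forward direction your $\liminf/\limsup$ construction is likewise more economical than the $H(n,k,i)$ bookkeeping, but again it is tied to $\mathbb{R}$, whereas the paper's construction is designed to survive in a normed codomain.
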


In the case $X=[0,1]$, they gave a somewhat elementary proof. However, for the case when $X$ was an arbitrary metric space, they used Michael's Selection Theorem (see \cite{M}). This fact might lead us to the idea to regard the Baire-1 functions defined on a paracompact topological space $X$ with values from a Banach space $Y$ instead of $\mathbb{R}$ as this selection theorem holds in this more general situation. Furthermore, if $Y$ is a Banach space, we can easily find a natural counterpart of the notion of open strips in $X\times{Y}$:

\begin{defi} We say that an open set $G\subseteq{X\times{Y}}$ is an open strip if the vertical section $G(x)=G\cap\left({\{x\}\times{Y}}\right)$ is convex for all $x\in{X}$. \end{defi}

Using this definition, the following holds:

\begin{theorem} Let $f:X\rightarrow{Y}$ be a function where $X$ is a paracompact topological space and $Y$ is a Banach space. Then $f$ is Baire-1 if and only if there is a sequence $(G_n)_{n=1}^{\infty}$ of open strips such that $\cap_{n=1}^{\infty}G_n=gr(f)$ and $diam(G_n(x))\rightarrow 0$ for each $x\in{X}$ as $n$ tends to infinity. \end{theorem}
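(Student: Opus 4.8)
The plan is to prove the two implications separately. For the forward implication I will build the strips explicitly from an approximating sequence, using only that $Y$ is a Banach space; the reverse implication is where the paracompactness hypothesis enters, through Michael's Selection Theorem. In both directions the three required features of the $G_n$ — openness, convexity of the vertical sections, and $\mathrm{diam}(G_n(x))\to 0$ — match up with the hypotheses and conclusions of the selection theorem, which is encouraging evidence that this is the right formulation.

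For the forward direction I assume $f$ is Baire-1, so $f=\lim_k f_k$ pointwise with each $f_k\colon X\to Y$ continuous. For fixed $x$ the sequence $(f_k(x))$ converges in the complete space $Y$, hence is Cauchy, so $\mathrm{diam}\{f_k(x):k\ge n\}\to 0$ as $n\to\infty$. I then set $\Phi_n(x)=\overline{\mathrm{conv}}\{f_k(x):k\ge n\}$ and define $G_n=\{(x,y):\mathrm{dist}(y,\Phi_n(x))<1/n\}$. Each vertical section $G_n(x)=\Phi_n(x)+B(0,1/n)$ is convex (a Minkowski sum of convex sets) and contains $f(x)$; since $\mathrm{diam}(\overline{\mathrm{conv}}\,S)=\mathrm{diam}(S)$ in a normed space, $\mathrm{diam}(G_n(x))\le\mathrm{diam}\{f_k(x):k\ge n\}+2/n\to 0$. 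Openness of $G_n$ follows from lower semicontinuity of $x\mapsto\Phi_n(x)$: the map $x\mapsto\{f_k(x):k\ge n\}$ is lower semicontinuous because $\{x:\exists k\ge n,\ f_k(x)\in V\}=\bigcup_{k\ge n}f_k^{-1}(V)$ is open for every open $V$, and passing to the convex hull and the closure preserves lower semicontinuity. Finally $\bigcap_n G_n=gr(f)$: the inclusion $\supseteq$ is clear, and conversely, if $(x,y)\in\bigcap_n G_n$ then both $y$ and $f(x)$ lie in $G_n(x)$, whose diameter tends to $0$, forcing $y=f(x)$. Note that this construction uses no property of $X$.

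For the reverse direction I assume such strips $G_n$ exist. Since $gr(f)\subseteq G_n$, every section $G_n(x)$ is a nonempty open convex set containing $f(x)$. The section multifunction $x\mapsto G_n(x)$ is lower semicontinuous: for open $V\subseteq Y$, the set $\{x:G_n(x)\cap V\ne\emptyset\}$ is the image of the open set $G_n\cap(X\times V)$ under the (open) coordinate projection $X\times Y\to X$, hence open. Passing to closures, $x\mapsto\overline{G_n(x)}$ is a lower semicontinuous carrier with nonempty closed convex values, so Michael's Selection Theorem provides a continuous selection $f_n\colon X\to Y$ with $f_n(x)\in\overline{G_n(x)}$ for all $x$. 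Because $f(x)\in G_n(x)\subseteq\overline{G_n(x)}$ as well, $\|f_n(x)-f(x)\|\le\mathrm{diam}(\overline{G_n(x)})=\mathrm{diam}(G_n(x))\to 0$, so $f_n\to f$ pointwise and $f$ is Baire-1.

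The genuinely nontrivial ingredient is this reverse direction, where the continuous approximants must be manufactured from the geometry of the strips; this is exactly the role of Michael's Selection Theorem, and the work lies in verifying its hypotheses — that the section multifunction is lower semicontinuous (via openness of the projection) and that taking closures keeps the values closed, convex and nonempty. The convexity of the sections and the shrinking-diameter condition are not incidental: the former is precisely what makes the selection theorem applicable, while the latter is what forces the selections to converge to $f$. In the forward direction the only point needing care is securing openness together with convexity of the sections simultaneously, which the ``closed convex hull of the tails'' construction achieves at once.
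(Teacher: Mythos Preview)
Your proof is correct. The reverse implication is essentially identical to the paper's: both pass to the closed-section multifunction $x\mapsto\overline{G_n(x)}$, verify lower hemicontinuity via openness of $G_n$ (you phrase it through the open projection, the paper by looking at a product neighborhood of a point of $G_n$), and invoke Michael's theorem to get continuous selections converging to $f$ by the diameter hypothesis.

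The forward implication, however, is built quite differently. The paper works through the Borel hierarchy: it uses that $\{x:\|f_n(x)-f(x)\|<1/k\}$ is $\Sigma_2$ (Proposition~2.1), writes it as a union of closed sets $A(n,k,i)$, defines closed sets $H(n,k,i)=\{(x,y):x\in A(n,k,i),\ \|y-f_n(x)\|\ge 1/k\}$, enumerates them, and takes the complements of finite unions as the strips; each vertical section is then either all of $Y$ or a single open ball. Your ``closed convex hull of the tails'' construction $\Phi_n(x)=\overline{\mathrm{conv}}\{f_k(x):k\ge n\}$ with $G_n=\{(x,y):\mathrm{dist}(y,\Phi_n(x))<1/n\}$ is more geometric and avoids the Borel-class machinery entirely; lower semicontinuity of $\Phi_n$ (and hence openness of $G_n$) comes directly from continuity of the $f_k$. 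The trade-off is that the paper's decomposition is what generalizes verbatim to higher Baire classes (their Theorem~1.6), where the approximating $f_k$ are no longer continuous and your lower-semicontinuity argument for the tail multifunction would not go through; for the Baire-1 case itself, your route is shorter and self-contained.
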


Concerning the graphs of Baire-$\alpha$ functions for $\alpha>1$ countable ordinals, we did not manage to prove a theorem in the setting of Theorem 1.3. Analogous results to the one achieved in the direction in which we assume $f$ is Baire-$\alpha$ can be obtained though rather easily in even more general settings, as it will be shown in Theorem 1.6. Namely, we can find some nice properties of the graph of a Baire-$\alpha$ function $f:X\rightarrow{Y}$ where $X$ is a topological space and $Y$ is a metric space, and then maybe we can achieve more specific ones if we require some conditions on $Y$: for example in Theorem 1.3 we restricted $Y$ to be a Banach space and we managed to prove a property of $gr(f)$ which does not even make sense if $Y$ is an arbitrary metric space. Following this line of thought, we can have a generalization of this direction of Theorem 1.3 in more general settings for any Baire class, but before we would formulate it, we would like to recall the following notation for higher Borel classes:

\begin{defi} A set $A$ is of additive class 1, ($A\in\Sigma_{1}$), if and only if it is open. For any countable ordinal greater than zero, $A$ is of multiplicative class $\alpha$, ($A\in\Pi_{\alpha}$), if and only if its complement is in $\Sigma_{\alpha}$. Finally, $A$ is of additive class $\alpha$, ($A\in\Sigma_{\alpha}$), for $\alpha>1$ if and only if there is a sequence of sets $A_1, A_2, ...$ such that each $A_i$ is in $\Pi_{\alpha_i}$ for some $\alpha_i<\alpha$ and $\bigcup_{i=1}^{\infty}A_i=A$. \end{defi}

It is useful to remark that the behaviour of the Borel hierarchy can be a bit chaotic in general topological spaces. To be more precise, we prefer if the higher Borel classes contain the lower ones, that is for $0<\beta<\alpha<\omega_1$, every set in $\Pi_\beta$ or $\Sigma_\beta$ is also in $\Pi_\alpha$ and $\Sigma_\alpha$. However, this property does not hold necessarily: for example if we regard the cofinite topology over any uncountable set, we can immediately see that none of the nontrivial open sets is in $\Sigma_2$. The following result is well-known and can be easily obtained by transfinite induction: if X has the property that any open set is in $\Sigma_2$ (or equivalently, any closed set is in $\Pi_2$) then every set in $\Pi_\beta$ or $\Sigma_\beta$ is also in $\Pi_\alpha$ and $\Sigma_\alpha$ for any $0<\beta<\alpha<\omega_1$. The spaces satisfying this requirement are called $G_\delta$ or perfect spaces and their defining property can be regarded as a separation axiom: the closed sets can be separated from their complements using only countably many open sets. It can be easily checked that all the metrizable spaces are perfect spaces, which is a fact we will use in this paper.

Let us recall that a topological vector space is a Fr\'echet space if it is locally convex and complete with a translation invariant metric. As the concept of convex sets exists in Fr\'echet spaces, we can similarly define open strips of $X\times{Y}$ if $Y$ is a Fr\'echet space instead of being a Banach space. Furthermore, we can generalize this definition to higher Borel classes:

\begin{defi} Let $X$ be a topological space and $Y$ be a Fr\'echet space. We say that a $\Sigma_{\alpha}$ set $S\subseteq{X\times{Y}}$ is a $\Sigma_{\alpha}$-strip if the vertical section $S(x)=S\cap\left({\{x\}\times{Y}}\right)$ is convex for all $x\in{X}$. \end{defi}

\begin{theorem} Let $f:X\rightarrow{Y}$ be a Baire-$\alpha$ function where $X$ is a topological space, $Y$ is a metric space and let $\alpha$ be a countable ordinal. Then there exists a sequence $(G_n)_{n=1}^{\infty}$ of $\Sigma_{\alpha}$ sets in $X\times{Y}$ such that $\cap_{n=1}^{\infty}G_n=gr(f)$ and $diam(G_n(x))\rightarrow 0$ for each $x\in{X}$ as $n$ tends to infinity. Furthermore, if $Y$ is a Fr\'echet space, these $\Sigma_\alpha$ sets can be chosen to be $\Sigma_\alpha$ strips. \end{theorem}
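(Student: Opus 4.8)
The plan is to prove the stronger, Fréchet assertion in full, since the metric case is literally the same construction with metric balls in place of convex neighbourhoods. Write $f=\lim_k f_k$ pointwise, where each $f_k$ is of Baire class strictly below $\alpha$; when $\alpha=\gamma+1$ is a successor I would insist that every $f_k$ have Baire class exactly $\gamma$, and when $\alpha$ is a limit I would only use that the classes $\beta_k$ of the $f_k$ satisfy $\beta_k<\alpha$. The elementary building blocks are the vertical neighbourhoods of the approximating graphs,
$$E_{k,n}=\{(x,y)\in X\times Y:\ d(f_k(x),y)<1/n\}.$$
The first step is a Lebesgue--Hausdorff-type estimate: if $g$ is of Baire class $\beta$ into a metric space, then $g^{-1}(U)\in\Sigma_{\beta+1}$ for every open $U$. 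I would prove this by transfinite induction, writing $g^{-1}(U)=\bigcup_{m,K}\bigcap_{k\ge K}g_k^{-1}(\{y':d(y',U^c)\ge 1/m\})$ for a representing sequence $g_k\to g$. The point of using the closed sets $\{d(\cdot,U^c)\ge 1/m\}$ is that it keeps the class at $\beta+1$ and, crucially, never uses that the domain is perfect, which matters here because $X$ (hence $X\times Y$) is an arbitrary topological space. Since $(x,y)\mapsto d(f_k(x),y)$ is of Baire class $\beta_k$, this yields $E_{k,n}\in\Sigma_{\beta_k+1}\subseteq\Sigma_\alpha$.

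Next I would set $G_n=\bigcup_{k\ge n}E_{k,n}$, which as a countable union of $\Sigma_\alpha$ sets is again $\Sigma_\alpha$. Its section is $G_n(x)=\bigcup_{k\ge n}B(f_k(x),1/n)$, and since $(f_k(x))_k$ converges, the tail $\{f_k(x):k\ge n\}$ has diameter tending to $0$; hence $diam(G_n(x))\le 2/n+diam\{f_k(x):k\ge n\}\to 0$. Finally $\bigcap_n G_n=gr(f)$: if $(x,y)$ lies in every $G_n$ then $y$ is the limit of a subsequence of $(f_k(x))$, so $y=f(x)$, while the reverse inclusion is immediate from $f_k(x)\to f(x)$. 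This settles the metric-valued statement.

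For the Fréchet refinement the sections must be made convex. I would fix a decreasing sequence $(V_n)$ of open, convex, symmetric neighbourhoods of $0$ with $diam(V_n)\to 0$ (available by local convexity) and replace each ball by a convex combination drawn from the tail, defining
$$G_n=\bigcup\Big\{(x,y):\ y-\sum_{i=1}^r t_i f_{k_i}(x)\in V_n\Big\},$$
the union ranging over all finite tuples $n\le k_1<\dots<k_r$ and rational weights $t_i\ge 0$ with $\sum_i t_i=1$. Each map $(x,y)\mapsto y-\sum_i t_i f_{k_i}(x)$ has Baire class $\max_i\beta_{k_i}<\alpha$, so each set in the union is a preimage of the open set $V_n$ and lies in $\Sigma_\alpha$ by the estimate above; the union is countable, so $G_n\in\Sigma_\alpha$. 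Because $V_n$ is open, rational weights already realise the full convex hull, whence $G_n(x)=\mathrm{conv}\{f_k(x):k\ge n\}+V_n$, a sum of two convex sets and therefore convex. The diameter and intersection computations carry over, using that convex hulls do not enlarge seminorm-diameters (so the section-diameters still tend to $0$) and that $f(x)\in\mathrm{conv}\{f_k(x):k\ge n\}+V_n$ for every $n$.

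The main obstacle I anticipate is getting the Borel class exactly right. Over a non-perfect domain the lower additive and multiplicative classes need not sit inside the higher ones, so the naive $\limsup$-formula for a sublevel set overshoots $\Sigma_\alpha$; the remedy is the closed-set form of the preimage estimate together with the choice of representing functions of the exact predecessor class in the successor case. The second delicate point is reconciling convexity of the sections with membership in $\Sigma_\alpha$: convex hulls are a priori an uncountable operation, and the construction only remains in $\Sigma_\alpha$ because the openness of $V_n$ lets one restrict to countably many rational convex combinations.
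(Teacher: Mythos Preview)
Your proof is correct and takes a genuinely different route from the paper's.

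The paper works \emph{subtractively}: it decomposes each set $\{x:d_Y(f_n(x),f(x))<1/k\}$ into $\Pi_\alpha$ pieces $A(n,k,i)$, builds the ``bad'' sets
\[
H(n,k,i)=\bigl\{(x,y):x\in A(n,k,i),\ d_Y(y,f_n(x))\geq 1/k\bigr\}\in\Pi_\alpha,
\]
and defines $G_j$ as the complement of a finite union of these. The vertical sections of $G_j$ are then finite intersections of metric balls (or all of $Y$), hence automatically convex once $Y$ is Fr\'echet; the metric and Fr\'echet assertions are thus handled by one construction. Your approach is \emph{additive}: you take countable unions of tube neighbourhoods $E_{k,n}$ of the approximating graphs, obtaining sections that are unions of balls. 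This is more transparent in the metric case and bypasses the auxiliary decomposition into the $A(n,k,i)$ entirely, but the sections are not convex, which forces your separate Fr\'echet argument via rational convex combinations and the open convex $V_n$. Both constructions give nested sequences; the paper's buys uniformity across the two cases, yours buys simplicity of the basic metric construction at the cost of a second, more delicate pass for convexity.

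One point worth making explicit: in the limit-ordinal case your inclusion $\Sigma_{\beta_k+1}\subseteq\Sigma_\alpha$ requires $\beta_k\geq 1$, since for $\beta_k=0$ the set $E_{k,n}$ is merely open, and on a non-perfect $X\times Y$ open sets need not lie in $\Sigma_\alpha$. This is harmless---any continuous $f_k$ may be regarded as Baire-$1$---but you should say so. (The paper's argument carries the same tacit assumption.)
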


Proving the converse of this theorem in this rather general setting appeared to be much more difficult. The idea we may follow is similar to the one we will use in the proof of Theorem 1.3: we construct Baire-$\alpha_n$ functions for $\alpha_n<\alpha$ through $\Sigma_\alpha$ sets satisfying the conditions we found during the proof of the other direction, and with the help of these conditions we attempt to show that $f$ is the pointwise limit of these Baire-$\alpha_n$ functions. The essence of this concept is the construction of these functions which reduces our question to a selection problem concerning Baire functions of a given class (e.g. in Theorem 1.3 to the problem continuous selections). With the help of the Kuratowski--Ryll-Nardzewski Theorem about measurable selections (see \cite{KRN} and Proposition 2.2 in the next section) we will prove the following for functions defined on Suslin spaces, which are the continuous images of Polish spaces:

\begin{theorem} Let $f:X\rightarrow{Y}$ be a function where $X$ is a metrized Suslin space, $Y$ is a separable Fr\'echet space. Then $f$ is Baire-$\alpha$ for some successor countable ordinal $\alpha$ if and only if there is a nested sequence $(G_n)_{n=1}^{\infty}$ of $\Sigma_{\alpha}$ strips in $X\times{Y}$ such that
\begin{itemize}
\item $\cap_{n=1}^{\infty}G_n=gr(f)$,
\item $diam(G_n(x))\rightarrow 0$ for each $x\in{X}$ as $n$ tends to infinity,
\item the projection of $(X\times{U})\cap{G_n}$ to $X$ is in $\Sigma_\alpha$ for each open subset $U$ of $Y$.
\end{itemize}
\end{theorem}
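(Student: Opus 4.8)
The statement is an equivalence, and I would treat the two implications quite differently: the forward one is essentially a refinement of Theorem 1.5, while the converse carries all the weight and is where the hypotheses on $X$, $Y$ and on $\alpha$ are genuinely used. Throughout I write $\alpha=\beta+1$; the whole point of assuming $\alpha$ to be a successor is that a pointwise limit of Baire-$\beta$ functions is Baire-$(\beta+1)=$ Baire-$\alpha$, so the entire converse reduces to manufacturing Baire-$\beta$ selections.

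For the forward implication I would start from the sequence of $\Sigma_\alpha$ strips provided by Theorem 1.5, which already gives $\cap_n G_n=gr(f)$ and $\mathrm{diam}(G_n(x))\to 0$. Replacing $G_n$ by $\bigcap_{k\le n}G_k$ makes the sequence nested; since $X$ is metrizable hence perfect, $\Sigma_\alpha$ is closed under finite intersections, and intersections of convex sections are convex, so the new sets are again $\Sigma_\alpha$ strips with the same two properties. The only genuinely new requirement is that $\mathrm{proj}_X\big((X\times U)\cap G_n\big)=\{x:G_n(x)\cap U\neq\emptyset\}$ be $\Sigma_\alpha$ for every open $U$. I would secure this by building the strips not abstractly but from a fixed sequence $g_m\to f$ of Baire-$\beta$ functions (which exists because $f$ is Baire-$\alpha$): since each $g_m$ satisfies $g_m^{-1}(\text{open})\in\Sigma_\alpha$, arranging the strips so that the event ``$G_n(x)$ meets $U$'' unwinds into a countable combination of the sets $g_m^{-1}(\cdot)$ keeps the projection inside $\Sigma_\alpha$, while convexity of the sections is imposed using a decreasing convex local base of $Y$ (available since $Y$ is locally convex and metrizable).

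For the converse I would, for each fixed $n$, consider the multifunction $F_n(x)=\overline{G_n(x)}$. Its values are nonempty (they contain $f(x)$, as $gr(f)\subseteq G_n$), closed and convex. Because $U$ is open, $\overline{G_n(x)}\cap U\neq\emptyset$ if and only if $G_n(x)\cap U\neq\emptyset$, so the third hypothesis says precisely that the weak preimages $\{x:F_n(x)\cap U\neq\emptyset\}$ lie in $\Sigma_\alpha$. As a separable Fr\'echet space is completely metrizable and separable, $Y$ is Polish, so the Kuratowski--Ryll-Nardzewski theorem (\cite{KRN}, Proposition 2.2) applies over the measurable space $X$ --- whose metrized Suslin structure is what makes the selection machinery and the Borel hierarchy cooperate --- and yields a measurable selection $f_n$ of $F_n$. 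Once the $f_n$ are in hand the rest is immediate: $f(x)$ and $f_n(x)$ both lie in $\overline{G_n(x)}$, whose diameter tends to $0$, so $f_n\to f$ pointwise, and if each $f_n$ is Baire-$\beta$ then $f$ is Baire-$\alpha$.

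The hard part is exactly the Baire class of the selection. Feeding $\Sigma_\alpha=\Sigma_{\beta+1}$ weak preimages into the usual least-index construction produces a selection whose open-preimages lie only in the $\sigma$-algebra generated by $\Sigma_\alpha$ --- effectively Baire class $\alpha$, one level too high --- because the least-index bookkeeping introduces $\Pi_\alpha$ complements that cannot be absorbed back into $\Sigma_\alpha$. To gain this one level I would invoke convexity, in direct analogy with the way Michael's theorem (\cite{M}) sharpens a lower-semicontinuous ($\Sigma_1$ weak preimage) convex-valued multifunction all the way to a continuous ($=$ Baire-$0$) selection --- which is exactly the $\alpha=1$ instance of the present theorem. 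Concretely, I would decompose each $\Sigma_\alpha$ weak preimage into its $\Pi_{<\alpha}$ constituents, run the selection on these lower-class pieces, and recombine the resulting partial selections by convex combinations governed by a Baire-$\beta$-measurable partition of unity, iterating to a genuine Baire-$\beta$ selection $f_n$; convexity of the sections is what guarantees these combinations remain selections and that the iteration converges inside $\overline{G_n(x)}$. Establishing this convex, class-$\beta$ refinement of Kuratowski--Ryll-Nardzewski is, to my mind, the true content of the theorem; granting it, the three displayed conditions assemble into ``$f$ is Baire-$\alpha$'' as above.
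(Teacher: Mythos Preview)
Your forward-direction sketch is essentially the paper's approach: build the strips from the approximating sequence $(f_n)$ so that each vertical section is a finite intersection of open balls centred at the $f_n(x)$, and then the projection $\{x:G_j(x)\cap U\neq\emptyset\}$ unwinds, via a countable dense subset of $U$, into countable combinations of sets $f_{n}^{-1}\big(B(u_t,1/k)\big)\in\Sigma_\alpha$.

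The converse, however, rests on a genuine misconception. You assert that feeding $\Sigma_\alpha$ weak preimages into Kuratowski--Ryll-Nardzewski yields a selection ``one level too high,'' with open preimages only in the $\sigma$-algebra generated by $\Sigma_\alpha$. That is not what KRN says: in the form of Proposition~2.2, if $\Psi^{-1}(\text{open})\in\Sigma_\alpha$ then the selection $f_n$ already satisfies $f_n^{-1}(\text{open})\in\Sigma_\alpha$, i.e.\ $f_n$ is a Borel class~$\alpha$ mapping outright. No level is lost. Your entire convex-partition-of-unity refinement scheme is therefore aimed at a nonexistent problem, and in fact the convexity of the \emph{sections} $G_j(x)$ is never used in the paper's converse argument.

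What you are actually missing is the passage from Borel class to Baire class. Having $f_n^{-1}(\text{open})\in\Sigma_\alpha=\Sigma_{\beta+1}$ does not by itself make $f_n$ Baire-$\beta$ (recall the two-point-codomain counterexample after Proposition~2.2). It is precisely here that the hypotheses on $X$ and $Y$ are spent: Proposition~2.3 (Hansell) says that when $X$ is a metrizable Suslin space and $Y$ is a locally convex topological vector space---in particular a Fr\'echet space---the Borel-$(\beta+1)$ mappings coincide with the Baire-$\beta$ functions. Once that is invoked the converse is two lines. So the ``true content of the theorem'' is not a convex sharpening of KRN but the appeal to Hansell's equivalence; the Suslin and Fr\'echet hypotheses are there for Proposition~2.3, not, as you suggest, to make the selection machinery itself cooperate.
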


\medskip

Finally, we focus on generalizing some of the results of \cite{MB}, in which we characterized the accumulation sets of graphs of Baire-2 functions $f:[0,1]\rightarrow{\mathbb{R}}$. In that paper, Theorem 4.1 and 4.2 stated the followings:

\begin{prop} Suppose $T\subseteq{[0,1]\times\mathbb{R}}$ and let us denote the set of accumulation points of some $gr(f)$ by $L_f$. There exists a bounded Baire-2 function satisfying $L_f=T$ if and only if $T$ is closed and $T\cap(\{x\}\times{\mathbb{R}})\neq\emptyset$ for each $x\in{[0,1]}$.

Furthermore, there exists a not necessarily bounded Baire-2 function satisfying $L_f=T$ if and only if $T$ is closed and there is a countable set $D\subseteq{[0,1]}$ such that $T\cap(\{x\}\times{\mathbb{R}})\neq\emptyset$ for each $x\in{[0,1]\setminus{D}}$. \end{prop}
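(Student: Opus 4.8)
The plan is to treat the two implications asymmetrically, since I expect the forward ("only if") directions to be soft and essentially independent of the Baire-$2$ hypothesis, with all the substance concentrated in the constructions. For closedness I would just note that $L_f$, being the set of accumulation points of a subset of the metric space $[0,1]\times\mathbb{R}$, is automatically closed. For the bounded case, fix $x_0$; since $[0,1]$ has no isolated points I choose $x_n\to x_0$ with $x_n\neq x_0$, and as $(f(x_n))$ is bounded I extract $f(x_{n_k})\to y_0$, giving $(x_0,y_0)\in L_f=T$, so the section over $x_0$ is nonempty. For the unbounded case the only extra work is to produce the countable set $D$. I would set $D=\{x_0: T\cap(\{x_0\}\times\mathbb{R})=\emptyset\}$ and observe that an empty section forces $\lim_{x\to x_0}|f(x)|=\infty$. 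The clean way to get countability is to write $F_N=\{x:|f(x)|\le N\}$, so that $[0,1]=\bigcup_N F_N$; every $x_0\in D$ lies in some $F_{N_0}$ (as $f(x_0)$ is finite) and is an \emph{isolated} point of $F_{N_0}$, because $|f|>N_0$ on a punctured neighbourhood of $x_0$. Since the isolated points of any subset of $\mathbb{R}$ form a countable set, $D\subseteq\bigcup_N\mathrm{Iso}(F_N)$ is countable.

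The heart is the converse. For the bounded case, given a closed $T\subseteq[0,1]\times[-M,M]$ with nonempty sections, I would use a \emph{lower semicontinuous base plus controlled dense perturbation}. Put $\phi(x)=\min T_x$; if $x_n\to x$ and $\phi(x_n)\to L$ then $(x,L)\in T$, so $\phi(x)\le L$ and $\phi$ is lower semicontinuous, hence Baire-$1$, with $gr(\phi)\subseteq T$. Next choose a countable dense set $\{(x_k,y_k)\}_{k\ge1}$ of $T$ and, for each $k$, a sequence of pairwise distinct points $x_{k,j}\to x_k$ (all distinct from the $x_k$'s) satisfying the \emph{speed control} $|x_{k,j}-x_k|<1/(k+j)$. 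I then define $f=y_k$ on each $\{x_{k,j}\}_j$, and $f=\phi$ off the countable set $C=\bigcup_{k,j}\{x_{k,j}\}$.

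To verify $L_f=T$ I argue both inclusions. The inclusion $T\subseteq L_f$ is immediate: $(x_{k,j},y_k)\to(x_k,y_k)$ places each $(x_k,y_k)$ in the closed set $L_f$, so $T=\overline{\{(x_k,y_k)\}}\subseteq L_f$. For $L_f\subseteq T$, take distinct graph points $(x_n,f(x_n))\to(x_0,y_0)$; the terms with $x_n\notin C$ already lie in $T$, while for terms $x_n=x_{k_n,j_n}$ distinctness forces $k_n+j_n\to\infty$ (only finitely many pairs have $k+j\le m$), so the speed control gives $|x_{k_n}-x_n|\to0$, and with $y_{k_n}=f(x_n)\to y_0$ one gets $(x_{k_n},y_{k_n})\to(x_0,y_0)$ with $(x_{k_n},y_{k_n})\in T$; by closedness $(x_0,y_0)\in T$ in either case. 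Finally $f$ is Baire-$2$: by the Lebesgue--Hausdorff characterization on the metric space $[0,1]$ it suffices to check $f^{-1}(U)\in\Sigma_3$ for open $U$, and here $f^{-1}(U)=(\phi^{-1}(U)\cap C^c)\cup\bigcup_{k:\,y_k\in U}\{x_{k,j}\}_j$, where the first piece is the intersection of a $\Sigma_2$ set with the $\Pi_2$ set $C^c$, hence in $\Sigma_3$, and the second is a subset of the countable set $C$, hence in $\Sigma_2\subseteq\Sigma_3$.

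For the unbounded case I would run the identical scheme with $\phi$ replaced by a selection of the possibly unbounded sections, for instance the nearest-to-origin selection attached to $d(x)=\mathrm{dist}(0,T_x)$, which is lower semicontinuous (hence Baire-$1$) and yields a selection $\sigma$ of Baire class $\le2$. The crucial bonus is automatic: $T_{x_0}=\emptyset$ forces $|\sigma(x)|=d(x)\to\infty$ as $x\to x_0\in D$, so the base already "blows up" over $D$ with no hand-built spikes, and the perturbation targets automatically have $x_k\notin D$ since $T$ carries no points over $D$; the same speed-control computation then shows the perturbations cannot create a finite accumulation point over any $x_0\in D$, as such a limit would have to lie in the empty $T_{x_0}$. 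I expect the \textbf{main obstacle} to be precisely this simultaneous balancing in the construction: forcing the graph to cluster on \emph{all} of $T$ while producing \emph{no} cluster point off $T$, and reconciling the mandatory blow-up over the possibly dense set $D$ with the chosen selection, all inside Baire class $2$. The device that keeps these demands from colliding is the speed control $|x_{k,j}-x_k|<1/(k+j)$, and getting its bookkeeping exactly right is where I anticipate the real care is needed.
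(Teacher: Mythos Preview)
Your forward (``only if'') directions match the paper's Proposition~5.1 essentially verbatim, so there is nothing to add there. For the constructions, however, your route is genuinely different from the one the paper takes.

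The paper does not build an explicit base selection. Instead it first invokes Lemma~2.5 to produce a countable set $A$ and a function $f_0:A\to Y$ with $L_{f_0}=T$ (this is the analogue of your speed-controlled perturbation, done via finite covers of the compact slices $T\cap C_n$), and then defines a multifunction $F$ equal to $\{f_0(x)\}$ on $A$, to $T_x$ off $A$ (and, in the non-compact case, to $\{y_i\}$ on an enumeration of $D\setminus A$, where $(y_i)$ has no accumulation point). It checks that $F^{-1}(G)\in\Sigma_3$ for every open $G$, applies the Kuratowski--Ryll-Nardzewski selection theorem to obtain a Borel-$3$ selection, and then uses Hansell's equivalence (Proposition~2.3) to conclude the selection is Baire-$2$. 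Your approach instead manufactures the base by hand via $\phi(x)=\min T_x$ (bounded case) or a nearest-to-origin selection (unbounded case), and then overlays the speed-controlled perturbation; the Baire-$2$ verification is a direct preimage computation rather than an appeal to a selection theorem. What your approach buys is self-containment: no measurable-selection machinery and no Borel/Baire equivalence theorem are needed, only the classical facts that lsc functions are Baire-$1$ and that Baire-$2$ on $[0,1]$ coincides with $\Sigma_3$-measurability. What the paper's approach buys is generality: the same argument runs unchanged with $[0,1]$ replaced by any $\sigma$-compact metrizable Suslin space without isolated points and $\mathbb{R}$ by any ($\sigma$-)compact Fr\'echet space, where explicit ``$\min$'' or ``nearest-point'' selections are not available.

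Your bounded-case construction is complete and correct. The unbounded case is sound in outline but two points are left implicit. First, you assert that the nearest-to-origin selection $\sigma$ is of Baire class $\le 2$ without justification; this is true (for instance because $d$ is lsc hence Baire-$1$, the tie-breaking set $\{x:(x,-d(x))\in T\}$ is $G_\delta$ as the zero set of the Baire-$1$ function $x\mapsto\mathrm{dist}((x,-d(x)),T)$, and piecing together along a $G_\delta$/$F_\sigma$ partition keeps you in Baire-$2$), but it deserves a line. Second, $\sigma$ is only defined on $[0,1]\setminus D$, so you must still assign values on $D$; enumerating $D=\{d_1,d_2,\dots\}$ and setting $f(d_i)=i$ (so that distinct $d_{i_n}\to d_0$ forces $f(d_{i_n})\to\infty$) works, and modifying your Baire-$2$ function on the countable set $D$ preserves $\Sigma_3$-measurability by the same computation you already gave for $C$. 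This is exactly what the paper does in its second case, so once you add these two sentences your argument is complete.
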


The proofs appearing in that article are a bit complicated, however elementary. We will see that these problems also can be handled in a much more general setting using stronger tools as certain selection theorems. We recall that a space is $\sigma$-compact if it can be expressed as the countable union of compact sets.

\begin{theorem} Let $T\subseteq{X\times{Y}}$, where $X$ is a $\sigma$-compact metrizable Suslin space with no isolated points, and $Y$ is a compact Fr\'echet space. There exists a Baire-2 function $f$ satisfying $L_f=T$ if and only if $T$ is closed and $T\cap(\{x\}\times{Y})\neq\emptyset$ for each $x\in{X}$.

Furthermore, if $Y$ is $\sigma$-compact, but not compact, there exists a Baire-2 function $f$ satisfying $L_f=T$ if and only if $T$ is closed and there is a countable set $D\subseteq{X}$ such that $T\cap(\{x\}\times{Y})\neq\emptyset$ for each $x\in{X\setminus{D}}$.
\end{theorem}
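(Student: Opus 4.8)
\emph{Necessity (both cases).} Since $L_f$ is the set of limit points of $gr(f)$ in the metrizable space $X\times Y$, it is automatically closed, so $T=L_f$ being closed is forced. For the sections, write $Y=\bigcup_{m}K_m$ with $K_m$ compact (in the compact case $Y=K_1$) and put $A_m=f^{-1}(K_m)$, so $\bigcup_m A_m=X$. I claim that if $T(x)=T\cap(\{x\}\times Y)$ is empty, then $x$ is a limit point of no $A_m$: indeed, if distinct $z_k\to x$ with $f(z_k)\in K_m$, compactness yields a subsequence with $f(z_{k_l})\to y\in K_m$, whence $(x,y)\in L_f=T$, contradicting $T(x)=\emptyset$. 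Consequently each such $x$ is \emph{isolated} in the $A_m$ containing it, and since $X$ is second countable each $A_m$ has only countably many isolated points; thus $D:=\{x:T(x)=\emptyset\}$ is countable, which gives the asserted $D$ in the $\sigma$-compact case and, since $X$ has no isolated points, gives $D=\emptyset$ in the compact case.

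\emph{Sufficiency: the construction.} The plan is to build $f$ as a Baire-class selection of $T$ perturbed on two countable sets. First I would produce a Baire-$1$ selection $s$ with $s(x)\in T(x)$ for every $x\notin D$. The multifunction $x\mapsto T(x)$ is closed-valued and, for open $V\subseteq Y$, the set $\{x:T(x)\cap V\neq\emptyset\}=\pi_X\big(T\cap(X\times V)\big)$ is $F_\sigma$: writing $V$ as a countable union of sets of the form $F\cap K_m$ ($F$ closed, $K_m$ compact), each $T\cap(X\times(F\cap K_m))$ is closed with compact $Y$-part, and the projection to $X$ of such a set is closed. Hence the multifunction is $\Sigma_2$-measurable, and the Kuratowski--Ryll-Nardzewski theorem (Proposition 2.2) supplies a selection whose preimages of open sets are $F_\sigma$; by the Lebesgue--Hausdorff characterization of the Baire classes, such an $s$ is Baire-$1$. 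Next fix a sequence $\{(a_i,b_i)\}_{i=1}^{\infty}$ dense in $T$, noting that $a_i\notin D$ because $b_i\in T(a_i)\neq\emptyset$. Using that $X$ is perfect I would choose pairwise disjoint sequences $P_i=\{x_{i,n}\}_n$ of distinct points with $x_{i,n}\to a_i$ and $d(x_{i,n},a_i)<1/i$; this is possible because the closure of a convergent sequence is nowhere dense in a perfect space, so at stage $i$ only the nowhere dense set $\bigcup_{j<i}P_j$ must be avoided while approaching the limit point $a_i$. Finally enumerate $D=\{d_j\}$ and pick $w_j\in Y$ leaving every $K_m$.

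\emph{Definition of $f$ and verification.} Define $f(x)=b_i$ if $x\in P_i$ (well defined by disjointness), else $f(x)=w_j$ if $x=d_j\in D$, and $f(x)=s(x)$ otherwise. For $L_f\supseteq T$: the distinct graph points $(x_{i,n},b_i)$ converge to $(a_i,b_i)$, so each $(a_i,b_i)\in L_f$, and since $L_f$ is closed, $T=\overline{\{(a_i,b_i)\}}\subseteq L_f$. For $L_f\subseteq T$: given distinct graph points $(z_m,f(z_m))\to(x,y)$, only finitely many $z_m$ lie in $D\setminus\bigcup_i P_i$ (else $w_{j(m)}$ would escape every $K_m$ and could not converge); on the part lying in $X\setminus(D\cup\bigcup_iP_i)$ one has $(z_m,s(z_m))\in T$, forcing the limit into $T$; on the part lying in some $P_i$, pass to a subsequence on which the index $i(m)$ is either constant, giving $(x,y)=(a_i,b_i)\in T$, or tends to infinity, in which case $d(z_m,a_{i(m)})<1/i(m)\to0$ yields $a_{i(m)}\to x$ and $(a_{i(m)},b_{i(m)})\to(x,y)\in T$ by closedness. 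Thus $L_f=T$, and in particular $L_f$ has empty section over each point of $D$, consistent with $T(x)=\emptyset$ there.

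\emph{Baire class and the main obstacle.} It remains to check $f$ is Baire-$2$. For open $U\subseteq Y$ the set $s^{-1}(U)$ is relatively $F_\sigma$ in the $G_\delta$ set $X\setminus D$, so it lies in $\Sigma_2\cap\Pi_2\subseteq\Sigma_3$; the corrections on the countable sets $D$ and $\bigcup_iP_i$ only add or remove $\Sigma_2$ pieces, so $f^{-1}(U)\in\Sigma_3$, and since $X$ is metrizable (hence perfect, so the Borel hierarchy is well behaved) the Lebesgue--Hausdorff characterization gives that $f$ is Baire-$2$. The genuinely delicate step, and the one where the hypotheses are really used, is the construction of the selection $s$ of the right descriptive class: one must verify the $\Sigma_2$-measurability of $x\mapsto T(x)$ (this is where $\sigma$-compactness of $Y$ enters) and extract from the Kuratowski--Ryll-Nardzewski scheme a selection with $F_\sigma$ preimages of open sets rather than merely Borel ones. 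Everything else --- the automatic divergence of any selection of $T$ as one approaches the empty-section points, the accumulation manufactured by the sequences $P_i$, and the bookkeeping of Baire classes across the two countable exceptional sets --- should then be routine.
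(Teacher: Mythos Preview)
Your argument is correct but organized differently from the paper's. The paper first applies Lemma~2.5 to produce a countable set $A\subseteq X$ and a function $f_0:A\to Y$ whose graph already accumulates exactly on $T$; it then packages $\{f_0(x)\}$, the vertical sections $T(x)$, and (in the non-compact case) singletons from a divergent sequence on $D\setminus A$ into a \emph{single} multifunction $F$ on all of $X$, verifies $F^{-1}(G)\in\Sigma_3$ for open $G$, and applies Kuratowski--Ryll-Nardzewski once to obtain a Borel-$3$ (hence Baire-$2$) selection $f$. You instead select first and patch afterwards: apply the selection theorem directly to $x\mapsto T(x)$ on $X\setminus D$ to get a Borel-$2$ selection $s$, then override $s$ on the countable sets $\bigcup_i P_i$ and $D$; your sequences $P_i$, built from a dense sequence in $T$ together with the fact that convergent sequences are nowhere dense in a perfect space, are your hands-on replacement for Lemma~2.5. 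Both routes rest on the same two ingredients --- that $\sigma$-compactness of $Y$ makes projections of relatively closed subsets of $X\times Y$ into $F_\sigma$ sets, and the identification of Borel and Baire classes via Proposition~2.3 --- but yours is more modular and yields the sharper intermediate fact that a selection of $T$ is already Borel-$2$, while the paper's is tidier in needing only one invocation of the selection theorem and no separate $L_f\subseteq T$ case analysis over the sets $P_i$.

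Two small points. First, where you write that $s^{-1}(U)$ ``lies in $\Sigma_2\cap\Pi_2$'' you mean it is the \emph{intersection} of a $\Sigma_2$ set with a $\Pi_2$ set, hence in $\Delta_3\subseteq\Sigma_3$; the conclusion is unaffected. Second, to invoke the Lebesgue--Hausdorff identification for $s$ you implicitly use that $X\setminus D$ is again Suslin (as a Borel subset of a metrizable Suslin space), or alternatively you may simply extend $s$ arbitrarily over $D$ and argue on $X$; either way the final Baire-$2$ claim for $f$ only needs Proposition~2.3 on $X$ itself, which you do correctly.
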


\section{Preliminaries}

In order to prove our theorems, we need to recall a classical result about the relationship of Baire classes and Borel classes (see \cite{K}). As it is short and useful to prove, we will not omit the proof and formulate it as a proposition:

\begin{prop} Let $f:X\rightarrow{Y}$ be a Baire-$\alpha$ function where $X$ is a topological space, $Y$ is a metric space, and $\alpha$ is a countable ordinal. Then for any open set $G\subseteq{Y}$ the set $f^{-1}(G)\subseteq{X}$ is a $\Sigma_{\alpha+1}$ set, or in other words, $f$ is a Borel-$(\alpha+1)$ mapping. \end{prop}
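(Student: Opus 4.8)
The plan is to argue by transfinite induction on the countable ordinal $\alpha$. For the base case $\alpha = 0$ the function $f$ is continuous, so $f^{-1}(G)$ is open, i.e. a member of $\Sigma_1 = \Sigma_{0+1}$, and there is nothing more to prove. For the inductive step I would fix $\alpha > 0$, assume the statement for every smaller ordinal, and write $f$ as a pointwise limit $f = \lim_{n} f_n$ of functions $f_n$ that are Baire-$\alpha_n$ with $\alpha_n < \alpha$. By the inductive hypothesis each $f_n$ pulls back open sets to $\Sigma_{\alpha_n+1}$ sets and hence, passing to complements, pulls back closed sets to $\Pi_{\alpha_n+1}$ sets; this is the only place the induction is invoked.

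Next I would exploit the metric on $Y$ to exhaust the open set $G$ from inside by closed sets. Setting $C_m = \{y \in Y : d(y, Y \setminus G) \ge 1/m\}$ (and treating the trivial case $G = Y$ separately), the sets $C_m$ are closed, increase with $m$, and satisfy $\bigcup_m C_m = G$. The heart of the argument is the set-theoretic identity
\[
f^{-1}(G) = \bigcup_{m=1}^\infty \bigcup_{N=1}^\infty \bigcap_{n=N}^\infty f_n^{-1}(C_m).
\]
The inclusion $\supseteq$ holds because each $C_m$ is closed and $f_n(x) \to f(x)$: if $f_n(x) \in C_m$ for all large $n$, then the limit $f(x)$ lies in $C_m \subseteq G$. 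For $\subseteq$, if $f(x) \in G$ then $\delta := d(f(x), Y\setminus G) > 0$; choosing $m$ with $2/m < \delta$ and using the reverse triangle inequality for the distance function together with $f_n(x) \to f(x)$, one gets $d(f_n(x), Y\setminus G) \ge \delta - 1/m > 1/m$ for all large $n$, so $f_n(x) \in C_m$ eventually. It is essential here to use preimages of the closed sets $C_m$ rather than of open sets, for the reason that emerges in the class count below.

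It then remains to read off the Borel class. Since $f_n^{-1}(C_m) \in \Pi_{\alpha_n+1}$, the complement $X \setminus f_n^{-1}(C_m)$ lies in $\Sigma_{\alpha_n+1}$; as $\alpha_n + 1 \le \alpha$, each such complement is a countable union of $\Pi_\gamma$ sets with $\gamma < \alpha$, so the union $\bigcup_{n \ge N}\bigl(X \setminus f_n^{-1}(C_m)\bigr)$ is again of this form and therefore lies in $\Sigma_\alpha$. Consequently $\bigcap_{n \ge N} f_n^{-1}(C_m) \in \Pi_\alpha$, and the outer double union over $m$ and $N$ is a countable union of $\Pi_\alpha$ sets, which is precisely a $\Sigma_{\alpha+1}$ set. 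The one delicate point — and the step I expect to be the main obstacle — is that, since $X$ is only assumed to be a topological space, I cannot invoke monotonicity of the Borel hierarchy (open sets need not be $\Sigma_2$, as stressed in the remark preceding this proposition). This bites exactly when some $f_n$ is continuous, for then $X \setminus f_n^{-1}(C_m)$ is merely open and need not split into lower $\Pi_\gamma$ classes. I would circumvent this by noting that every continuous function is also Baire-$1$ (via a constant approximating sequence), so for $\alpha > 1$ one may assume $\alpha_n \ge 1$ for all $n$, forcing $\alpha_n + 1 \ge 2$ and guaranteeing the required decomposition into $\Pi_\gamma$ with $\gamma < \alpha$; the remaining case $\alpha = 1$ is immediate from the identity, since then every $C_m$-preimage is closed and the double union is manifestly $F_\sigma = \Sigma_2$. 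Throughout, the only hierarchy facts used are that $\Sigma_\beta$ is closed under countable unions and that complementation interchanges $\Sigma$ and $\Pi$, both valid in an arbitrary topological space.
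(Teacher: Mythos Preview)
Your proof is correct and follows essentially the same route as the paper: transfinite induction, exhaustion of $G$ by the closed sets $C_m=\{y:d(y,Y\setminus G)\ge 1/m\}$, the identity $f^{-1}(G)=\bigcup_m\bigcup_N\bigcap_{n\ge N}f_n^{-1}(C_m)$, and the resulting class count. The paper phrases the bookkeeping slightly differently---for successor $\alpha$ it assumes outright that every $\alpha_n=\alpha-1$, and for limit $\alpha$ it simply asserts $\Pi_{\alpha_n+1}\subseteq\Pi_\alpha$---whereas you make the same reduction explicit by bumping any continuous $f_n$ up to Baire-$1$ and treating $\alpha=1$ separately; in fact your version handles the limit-ordinal case more carefully than the paper's one-line remark.
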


\begin{proof} We proceed by transfinite induction. For $\alpha=0$ the proposition states that for continuous functions the inverse image of an open set is open which is true by definition. What remains to discuss is the inductive step. Let us assume $\alpha\geq{1}$ and we already know the statement for smaller ordinals, and let $(f_k)_{k=1}^{\infty}$ be a sequence of functions from lower Baire classes whose pointwise limit is $f$, namely let $f_k$ be Baire-$\alpha_k$ where $\alpha_k<\alpha$. If $\alpha$ is a successor ordinal, we might assume $\alpha_k=\alpha-1$. Let us denote the neighborhood of radius $\varepsilon>0$ of a closed set $F$ by $B(F,\varepsilon)$, which is clearly an open set. Then we may construct the following decomposition of $G$ into closed sets $(F_n)_{n=1}^{\infty}$:
\begin{displaymath}
G=\bigcup_{n=1}^{\infty}Y\setminus{B}\left(Y\setminus{G}, \frac{1}{n}\right) =\bigcup_{n=1}^{\infty}F_n .
\end{displaymath}
One can easily check that our decomposition implies that $f(x)=\lim_{k\rightarrow\infty}f_k(x)\in{G}$ holds if and only if there is an $n$ such that $f_k(x)\in{F_n}$ for all large enough $k$. Indeed, as $F_n$ is closed, if there is such an $n$, then the sequence $(f_k(x))$ cannot converge out of $F_n\subseteq{G}$ hence $f(x)\in{G}$. Conversely, if $f(x)\in{G}$, it has a neighborhood of radius $\varepsilon$ for suitable positive $\varepsilon$ in $G$. By convergence, for large enough $k$ the point $f_k(x)$ is in the neighborhood of $f(x)$ of radius $\frac{\varepsilon}{2}$, thus $f_k(x)\in{B}\left(Y\setminus{G}, \frac{\varepsilon}{2}\right)$ for large enough $k$. Choosing $n$ such that $\frac{1}{n}<\frac{\varepsilon}{2}$ gives us a suitable $n$ in our statement, thus it proves the other direction of our equivalence.

This equivalence yields the following equation:
\begin{displaymath}
f^{-1}(G)=\{x : f(x)\in{G}\}=\bigcup_{n=1}^{\infty}\bigcup_{m=1}^{\infty}\bigcap_{k=m}^{\infty}\left\{x : f_k(x)\in{F_n}\right\}.
\end{displaymath}
Now, for any set $\left\{x : f_k(x)\in{F_n}\right\}$ the inductive hypothesis can be used: $f_k(x)$ is Baire-$\alpha_k$ thus the inverse image of any open set is $\Sigma_{\alpha_k+1}$, hence the inverse image of the closed set $F_n$ is in $\Pi_{\alpha_k+1}$. Indeed, the inverse image of the complement is the complement of the inverse image, and the complement of $F_n$ is open, while the complement of its inverse image is in $\Sigma_{\alpha_k+1}$ whose complement is in $\Pi_{\alpha_k+1}$. Now if $\alpha$ is a successor ordinal, these sets in $\Pi_{\alpha_k+1}$ are in $\Pi_\alpha$ as $\alpha_k+1=\alpha$. Otherwise, if $\alpha$ is a limit ordinal the sets in $\Pi_{\alpha_k+1}$ are in $\Pi_\alpha$ by definition: the same unions can be regarded.  Hence if we take the intersection of sets of these type, for all $k\geq{m}$, we will still have a $\Pi_\alpha$ set for any ordinal. Finally if we take the countable union of such sets (that is, for all $n$ and $m$) we will obtain a $\Sigma_{\alpha+1}$ set as the inverse image of the open set $G$. \end{proof}

\bigskip

The proofs of Theorem 1.7 and 1.9 rests on the following corollary of the Kuratowski--Ryll-Nardzewski Theorem about measurable selections, which we already mentioned in the introduction:

\begin{prop} Let $X$ be a metric space and let $Y$ be a separable complete metric space. Assume $\alpha\geq{1}$ is a countable ordinal and let $\Psi:X\rightarrow{2^{Y}}$ be a multifunction with nonempty closed values such that $\Psi^{-1}(G)$ is in $\Sigma_{\alpha}$ for each open subset $G$ of $Y$. Then $\Psi$ admits a Borel class $\alpha$ selection, that is a mapping $f:X\rightarrow{Y}$ such that the inverse image of any open set of $Y$ is in $\Sigma_{\alpha}$.
\end{prop}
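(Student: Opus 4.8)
The plan is to run the classical successive-approximation scheme behind the Kuratowski--Ryll-Nardzewski theorem, but to bookkeep the Borel complexity at each stage so that the resulting selection is of class exactly $\alpha$ rather than of the a priori larger class one gets from a black-box application. Fix a complete metric $d\leq 1$ on $Y$ and a countable dense set $\{y_n\}_{n=1}^{\infty}\subseteq{Y}$. I would build, by recursion on $k$ (starting from an $f_0$ chosen among the sets $\Psi^{-1}(B(y_n,1))$), countably-valued maps $f_k:X\rightarrow\{y_n\}$ satisfying $d(f_k(x),\Psi(x))<2^{-k}$ and $d(f_k(x),f_{k-1}(x))<3\cdot 2^{-k}$ for every $x$, with each level set $\{x:f_k(x)=y_n\}$ lying in $\Sigma_{\alpha}$.

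For the recursion step, given $f_{k-1}$, I would consider for each $n$ the candidate set
\[
A_n=\Psi^{-1}\!\bigl(B(y_n,2^{-k})\bigr)\cap f_{k-1}^{-1}\!\bigl(B(y_n,3\cdot 2^{-k})\bigr).
\]
The first factor lies in $\Sigma_{\alpha}$ by hypothesis and the second because $f_{k-1}$ is already of class $\alpha$, so $A_n\in\Sigma_{\alpha}$ as $\Sigma_{\alpha}$ is closed under finite intersections in the metric space $X$. A short triangle-inequality check, using that $\Psi(x)\neq\emptyset$ and that $d(f_{k-1}(x),\Psi(x))<2^{-(k-1)}$, shows that the family $(A_n)$ covers $X$.

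The key step, and the one where the naive ``least index'' choice fails, is to extract from this cover a single-valued selection without leaving $\Sigma_{\alpha}$: setting $f_k(x)=y_n$ on $A_n\setminus\bigcup_{m<n}A_m$ introduces complements and pushes the level sets up into $\Sigma_{\alpha+1}$, which would ultimately yield only a class $\alpha+1$ selection. Instead I would invoke the generalized reduction property of the additive class $\Sigma_{\alpha}$ (valid for $\alpha\geq{2}$ in metrizable spaces) to replace $(A_n)$ by a pairwise disjoint family $(A_n^{*})$ in $\Sigma_{\alpha}$ with $A_n^{*}\subseteq{A_n}$ and $\bigcup_n A_n^{*}=\bigcup_n A_n=X$, and then put $f_k\equiv y_n$ on $A_n^{*}$. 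This keeps every level set, and hence $f_k^{-1}(U)=\bigcup_{y_n\in{U}}A_n^{*}$ for every open $U$, inside $\Sigma_{\alpha}$, so that each $f_k$ is genuinely of Borel class $\alpha$.

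Finally, the bound $d(f_k,f_{k-1})<3\cdot 2^{-k}$ makes $(f_k)$ uniformly Cauchy, so it converges uniformly to some $f$; since $\Psi(x)$ is closed and $d(f_k(x),\Psi(x))\rightarrow 0$, the limit is a selection of $\Psi$. It then remains to observe that a uniform limit of class $\alpha$ maps is again of class $\alpha$: writing an open $U\subseteq{Y}$ as $\bigcup_m V_m$ with $V_m=\{z:d(z,Y\setminus{U})>1/m\}$ and choosing $k(m)$ with $d(f_{k(m)},f)<1/(2m)$ uniformly, one verifies $f^{-1}(U)=\bigcup_m f_{k(m)}^{-1}(V_{2m})$, a countable union of $\Sigma_{\alpha}$ sets and hence $\Sigma_{\alpha}$. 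The principal obstacle is exactly the disjointification in the recursion step: it is what forces the use of the reduction property rather than a bare appeal to Kuratowski--Ryll-Nardzewski, and it is also what makes the borderline case $\alpha=1$ genuinely different (there $\Sigma_1$ lacks the reduction property and continuous selections require convexity à la Michael), so that case would need separate treatment.
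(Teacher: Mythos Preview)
The paper does not prove this proposition at all; it is stated as a known corollary of the Kuratowski--Ryll-Nardzewski selection theorem and used as a black box in the proofs of Theorems~1.7 and~1.9. Your sketch is exactly the standard proof of that theorem with the Borel complexity tracked through the construction, and for $\alpha\geq 2$ it is correct: the covering argument via the triangle inequality, the use of the generalized reduction property of $\Sigma_\alpha$ in metrizable spaces to disjointify without climbing the hierarchy, and the uniform-limit computation $f^{-1}(U)=\bigcup_m f_{k(m)}^{-1}(V_{2m})$ are all sound.

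Your caveat about $\alpha=1$ is not merely a limitation of the method but a genuine failure of the statement as literally written. A concrete counterexample: take $X=Y=S^1$ and $\Psi(x)=\{z\in S^1:z^2=x\}$. This multifunction has nonempty closed values and $\Psi^{-1}(G)$ is open for every open $G$, yet it admits no continuous selection. So the proposition is simply false for $\alpha=1$. This does not harm the paper, since it only invokes the result with $\alpha\geq 2$ (indeed $\alpha=3$ in Theorem~1.9, and successor $\alpha\geq 2$ in Theorem~1.7), but you should record that the hypothesis ought to read $\alpha\geq 2$ rather than leave the boundary case as an unresolved loose end.
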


As we already remarked, we are interested in Baire selections. However, Baire classes of functions and Borel classes of mappings have a strong relationship. For example, as we have seen in Proposition 2.1, any Baire-$\alpha$ function defined on a topological space with values from a metric space is a Borel-$(\alpha+1)$ mapping. Nevertheless we have to be cautious since the converse does not hold in general: for instance if $X$ is a connected topological space with at least two points, and $Y$ is the two point discrete space $\{0,1\}$, then the characteristic function of a single point of $X$ is Borel-$2$, but not Baire-$1$, as all the continuous functions from $X$ to $Y$ are constants. As our aim is to use Proposition 2.2 in as general setting as it is possible, it would be beneficial to know some results concerning conditions yielding the equivalence of Baire-$\alpha$ functions and Borel-$(\alpha+1)$ mappings. We can recall a special form of Theorem 8 of \cite{H} (in that paper, every space is assumed to be perfect):

\begin{prop} Let $X$ be a perfect Suslin space and $Y$ be a metric space. If $X$ is metrizable and has topological dimension zero, or $Y$ is a locally convex topological linear space then the family of Baire-$\alpha$ functions coincides with the family of Borel-$(\alpha+1)$ mappings. \end{prop}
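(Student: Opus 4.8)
The plan is to prove the two inclusions separately, one of which is already at hand. The inclusion of Baire-$\alpha$ functions into Borel-$(\alpha+1)$ mappings is precisely the content of Proposition 2.1, so the whole force of the statement lies in the reverse inclusion: every Borel-$(\alpha+1)$ mapping $f:X\to Y$ is Baire-$\alpha$. I would prove this by transfinite induction on $\alpha$. The base case $\alpha=0$ is immediate, since a Borel-$1$ mapping is by definition one for which preimages of open sets are open, i.e. a continuous (Baire-$0$) function; here the perfectness of $X$ guarantees that the Borel hierarchy does not collapse and that the induction is meaningful. For the inductive step I would assume the equivalence for all $\beta<\alpha$ and analyse an arbitrary Borel-$(\alpha+1)$ mapping $f$.

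The engine of the inductive step is a two-part approximation. First, for each $n$ I would cover $Y$ by open balls of radius $1/n$; since $f$ is Borel-$(\alpha+1)$, the preimages of these balls form a cover of $X$ by $\Sigma_{\alpha+1}$ sets. Using a reduction/disjointification argument for the additive Borel classes (valid in metrizable, and more generally via the perfectness of $X$), this cover can be refined to a partition $X=\bigsqcup_i A_i^n$ whose pieces lie simultaneously in $\Sigma_{\alpha+1}$ and in $\Pi_{\alpha+1}$, with $f$ oscillating by less than $2/n$ on each $A_i^n$. Choosing a value $y_i^n\in f(A_i^n)$ and letting $g_n$ be the countably-valued function equal to $y_i^n$ on $A_i^n$, we obtain $d(f(x),g_n(x))<2/n$ for all $x$. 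Since a uniform limit of Baire-$\alpha$ functions into a metric space is again Baire-$\alpha$, it suffices to show that each $g_n$ is Baire-$\alpha$.

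The second, and genuinely hard, part is exactly this last claim: a countably-valued function whose level sets lie in $\Sigma_{\alpha+1}\cap\Pi_{\alpha+1}$ must be exhibited as a pointwise limit of functions of class smaller than $\alpha$. For a real-valued single characteristic function $\chi_A$ with $A$ in both $\Sigma_{\alpha+1}$ and $\Pi_{\alpha+1}$ this is classical and follows from the induction hypothesis by interpolating in $[0,1]$; the difficulty is to assemble the countably many values $y_i^n\in Y$ into a single $Y$-valued function without raising the class, and here the two standing hypotheses enter in complementary ways. If $Y$ is locally convex, I would build Baire-$\alpha$ real functions $\phi_i$ forming a partition of unity subordinate to the $A_i^n$ and set $g_n=\sum_i \phi_i\, y_i^n$; local convexity is what makes this convex-combination (barycentric) interpolation land in $Y$ and depend continuously on the scalars, so $g_n$ inherits class $\alpha$. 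If instead $X$ is zero-dimensional and metrizable, the ambiguous-class partition can be realised through clopen refinements at each finite stage, so that the discrete-valued $g_n$ is directly a pointwise limit of lower-class functions with no interpolation at all. The connectedness-versus-discreteness counterexample recalled before this proposition shows that some such hypothesis is unavoidable: without one of them a countably-valued function measurable with respect to $\Sigma_{\alpha+1}\cap\Pi_{\alpha+1}$ need not be Baire-$\alpha$.

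The main obstacle I anticipate is that $Y$ is not assumed separable, so the ball-cover of $Y$ in the first part is uncountable and its pullback family is not a priori a countable union of well-behaved Borel pieces. This is precisely where the Suslin hypothesis on $X$ becomes indispensable: by the $\sigma$-discrete decomposition machinery of \cite{H}, Borel maps out of an analytic (Suslin) domain admit $\sigma$-discrete Borel decompositions, which restore the countable bookkeeping that separability would otherwise provide. Carrying this decomposition through the transfinite induction, in tandem with the reassembly step above, is the technical heart of the argument; the remaining verifications (closure of Baire-$\alpha$ under uniform limits, and the characteristic-function lemma) are routine consequences of the induction hypothesis.
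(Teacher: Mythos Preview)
The paper does not prove this proposition at all: it is stated, without proof, as a special case of Theorem~8 of Hansell~\cite{H}. There is therefore no argument in the present paper to compare your proposal against.

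For what it is worth, your outline is a faithful high-level sketch of the classical Lebesgue--Hausdorff--Banach argument as extended by Hansell. You correctly identify the three essential ingredients: the uniform approximation of $f$ by countably-valued functions $g_n$ whose level sets are ambiguous of class $\alpha+1$; the reassembly of those pieces into a Baire-$\alpha$ function, which is where the locally convex or zero-dimensional hypothesis is spent; and the $\sigma$-discrete decomposition machinery of~\cite{H} that replaces the separability of $Y$ one would otherwise need. One place where your description is slightly loose is the locally convex step: the standard argument does not literally form an infinite barycentric sum $\sum_i \phi_i\,y_i^n$ (which would raise convergence issues in a general locally convex space), but rather first proves that $\chi_A$ is Baire-$\alpha$ whenever $A$ is ambiguous of class $\alpha+1$, and then uses the linear structure of $Y$ to combine \emph{finitely} many such terms at each stage of a further pointwise limit. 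This is a matter of presentation rather than a genuine gap, and your diagnosis of why some hypothesis on $X$ or $Y$ is unavoidable is exactly right.
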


\begin{megj} In \cite{K}, \cite{KRN} and \cite{H}, and in several further articles and books other types of notation are used for Borel classes, causing a subtle ambiguity with our recent paper. In particular, in many papers the elements of $\Sigma_0$ are the open sets instead of the elements of $\Sigma_1$, and the higher Borel classes are defined from this starting point the same way we did in Definition 1.4. It is worth mentioning that this translation of the indices only leads to a difference in the case of finite ordinals as in the definition of $\Sigma_\omega$ we consider the same unions. \end{megj}

We wish to use Propositions 2.2 and 2.3 simultaneously, thus we have to restrict our observations to spaces satisfying the conditions of both. This explains why we stated Theorem 1.7 in the setting we did.

\medskip

For the proof of Theorem 1.9, we first generalize Lemma 3.1 of \cite{MB}. We can formulate an almost identical proposition and the proof is also verbatim:

\begin{lemma} Let $X$ and $Y$ be $\sigma$-compact metric spaces such that $X$ has no isolated points. For a given closed set $T\subseteq{X\times{Y}}$ there exists a countable set $A\subseteq{X}$ such that there is a function $f:A\rightarrow{Y}$ satisfying $L_f=T$. \end{lemma}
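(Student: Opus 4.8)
The plan is to build the countable set $A$ and the function $f$ by a stagewise construction that approximates a fixed countable dense subset of $T$ while keeping every graph point close to $T$. First I would record that, being $\sigma$-compact metric spaces, $X$ and $Y$ are separable, hence so is the product $X\times Y$ and its closed subspace $T$; fix a countable dense subset $\{t_k\}_{k=1}^{\infty}$ of $T$, writing $t_k=(p_k,q_k)$. (If $T=\emptyset$ we simply take $A=\emptyset$.) On $X\times Y$ I use the metric $d((x,y),(x',y'))=\max\{d_X(x,x'),d_Y(y,y')\}$, so that the distance of a point $(a,q_k)$ from $t_k$ equals $d_X(a,p_k)$.

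Next I construct $A$ in stages $N=1,2,\dots$. At stage $N$, for each $k\le N$ I pick a point $a_{N,k}\in X$ with $d_X(a_{N,k},p_k)<1/N$ that is distinct from all points chosen at earlier stages or earlier in the present stage, and I declare $f(a_{N,k})=q_k$. This choice is always possible: since $X$ has no isolated points every ball in $X$ is infinite, while only finitely many points have been used so far, so a fresh point in $B(p_k,1/N)$ is available. Let $A$ be the (countable) set of all points chosen and let $f\colon A\to Y$ be the resulting function, which is well defined because each domain point is used exactly once. By construction every graph point produced at stage $N$ is of the form $(a_{N,k},q_k)$ and therefore lies within distance $1/N$ of $t_k\in T$, hence within $1/N$ of $T$.

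It remains to verify $L_f=T$. For the inclusion $T\subseteq L_f$, fix $k$; the points $(a_{N,k},q_k)$ for $N\ge k$ are pairwise distinct and converge to $(p_k,q_k)=t_k$, so $t_k$ is an accumulation point of $gr(f)$, i.e. $t_k\in L_f$. Since the derived set $L_f$ is closed (the ambient space is metric, hence $T_1$) and $\{t_k\}$ is dense in the closed set $T$, we get $T=\overline{\{t_k\}}\subseteq L_f$. For the reverse inclusion, observe that for each $\delta>0$ only finitely many graph points satisfy $d(\cdot,T)\ge\delta$, because such a point must come from a stage $N$ with $1/N>\delta$, and each stage contributes only finitely many points. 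Hence if $z\in L_f$, choosing distinct graph points converging to $z$, all but finitely many of them lie within $\delta$ of $T$, so $d(z,T)\le\delta$; letting $\delta\to0$ gives $z\in\overline{T}=T$.

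The only genuine difficulty is arranging both inclusions at once: making the graph accumulate on all of $T$ forces infinitely many graph points near each $t_k$, yet we must prevent any accumulation point from escaping $T$. The stagewise (diagonal) organization resolves this tension by ensuring that the distance to $T$ of the points introduced at stage $N$ is driven to $0$ as $N$ grows, so that the only possible limit points are those on $T$. The no-isolated-points hypothesis on $X$ is exactly what guarantees enough room to keep selecting fresh domain points; everything else is bookkeeping.
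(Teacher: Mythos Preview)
Your argument is correct. The overall strategy---build a countable graph whose points at stage $N$ lie within $1/N$ of $T$, then check both inclusions---matches the paper's, but the mechanism you use to discretize $T$ is different. The paper exploits $\sigma$-compactness directly: it exhausts $X\times Y$ by compact sets $C_n$, covers each $T_n=T\cap C_n$ by finitely many balls of radius $1/n$, and chooses one graph point per ball with a fresh $x$-coordinate. You instead observe that $\sigma$-compact metric spaces are separable, fix a countable dense set $\{t_k\}$ in $T$ once and for all, and at stage $N$ approximate $t_1,\dots,t_N$ by graph points with fresh $x$-coordinates. Your organization is a little cleaner and, notably, it never uses compactness at all: your proof actually establishes the lemma under the weaker hypothesis that $X$ and $Y$ are merely \emph{separable} metric spaces with $X$ having no isolated points. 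The paper's finite-cover argument, on the other hand, keeps the number of points introduced at each stage finite without having to bound the index $k$ explicitly, but genuinely needs compactness for that.
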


\begin{proof} The product space $X\times{Y}$ is also $\sigma$-compact, hence there is an increasing sequence of compact sets $(C_n)_{n=1}^{\infty}$ with limit $X\times{Y}$. Then $T_n=T\cap{C_n}$ is also a compact set. We will construct $A$ and $f$ by induction. Let us consider an open ball of radius one around each point of $T_1$. These balls give an open cover of the compact set $T_1$ hence it is possible to choose a finite cover. Let us take a point in each ball of the finite cover such that the $x$ coordinates of these points are pairwise different. As none of the points of $X$ is isolated, it is clearly possible. Denote the set of these points by $F_1$, and the set of their $x$ coordinates by $A_1$. In the following step, let us take open balls of radii $\frac{1}{2}$ around each point of $T_2$, choose a finite cover, and take points in each of these balls with pairwise different $x$ coordinates, which are also distinct from the points in $A_1$. Let us define $A_2$ and $F_2$ analogously, and continue this procedure: in the $n^{\text{th}}$ step regard the $\frac{1}{n}$-neighborhoods of the points of $T_n$, and define the finite sets $F_n$ and $A_n$ using these open balls. Now if we let $A=\bigcup_{n=1}^{\infty}A_n$ and $F=\cup_{n=1}^{\infty}{F_n}$, these are countable sets, and we may define $f$ to be the function that assigns to every $x\in{A}$ the $y$ coordinate of the chosen point in $F$ above $x$. The equality $L_f=T$ can easily be checked, as in [3]. 
\end{proof}

\section{Baire-1 functions}

Before we would start the proof of Theorem 1.3, a short remark should be mentioned. Theorem 1.3 gives almost the same characterization for the graphs of Baire-1 functions from $X$ to $Y$ as the one given in \cite{ACP} for the graphs of Baire-1 functions from $X$ to $\mathbb{R}$: in our statement, we can find the reasonable counterpart of the open strip condition of the real-valued case. However, besides that we drew up an additional limit condition concerning the diameter of the vertical sections, that has importance during the proof of the direction in which we show that if the graph has the given properties, then it is the graph of a Baire-1 function, and we work with the closure of the convex sections. What we would like to emphasize that this limit condition is vital and can be found implicitly in the more specific form of the theorem, too. We formulate the relevant fact as a proposition, since the author of this paper firmly believes this result has been published already but has yet to see a source:

\begin{prop} Let $E$ be a finite dimensional Banach space and $(C_n)_{n=1}^{\infty}$ is a nested sequence of closed convex sets such that $\bigcap_{n=1}^{\infty}C_n$ equals a point $p$. Then $diam(C_n)\rightarrow{0}$. \end{prop}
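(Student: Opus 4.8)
The plan is to argue by contradiction, the whole point being to manufacture a second point of $\bigcap_{n=1}^{\infty}C_n$ different from $p$. Since the sequence is nested, $(diam(C_n))_{n=1}^{\infty}$ is non-increasing, so it converges to some $L\in[0,\infty]$; I want to show $L=0$. Suppose instead that $L>0$, and fix a \emph{finite} number $\delta$ with $0<2\delta<L$ (for instance $\delta=L/4$ if $L<\infty$, and $\delta=1$ if $L=\infty$). Then $diam(C_n)\geq L>2\delta$ for every $n$.

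The next step extracts, for each $n$, a point of $C_n$ lying on a fixed sphere about $p$, and this is where convexity does the work. Because $diam(C_n)>2\delta$, I can choose $a_n,b_n\in C_n$ with $\|a_n-b_n\|>2\delta$; the triangle inequality $\|a_n-p\|+\|b_n-p\|\geq\|a_n-b_n\|>2\delta$ then forces at least one of them, say $q_n$, to satisfy $\|q_n-p\|>\delta$. Now $p\in C_n$ as well (since $p$ lies in the intersection), so by convexity the entire segment $[p,q_n]$ is contained in $C_n$. As $\|q_n-p\|>\delta$, I may slide along this segment to the point $r_n=p+\delta\frac{q_n-p}{\|q_n-p\|}$, which lies in $C_n$ and satisfies $\|r_n-p\|=\delta$ exactly.

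Finally the finite dimension of $E$ is invoked. The sphere $\{z\in E:\|z-p\|=\delta\}$ is closed and bounded, hence compact, so the sequence $(r_n)$ admits a subsequence $(r_{n_k})$ converging to some $r$ with $\|r-p\|=\delta>0$; in particular $r\neq p$. Fixing any $m$, the tail $r_{n_k}$ (for $n_k\geq m$) lies in $C_{n_k}\subseteq C_m$ by nestedness, and since $C_m$ is closed its limit $r$ belongs to $C_m$. As $m$ was arbitrary, $r\in\bigcap_{n=1}^{\infty}C_n=\{p\}$, which contradicts $r\neq p$. Therefore $L=0$, i.e.\ $diam(C_n)\rightarrow 0$.

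The one genuinely delicate point, and the reason the hypothesis that $E$ is finite dimensional cannot be dropped, is the passage from $(r_n)$ to a convergent subsequence: it rests entirely on the compactness of spheres. In an infinite dimensional normed space the sphere is not compact and the conclusion fails, so this is precisely the step that encodes the finite dimensionality and, as the surrounding discussion notes, explains why the diameter limit condition must be imposed as a separate hypothesis in the main theorems rather than being automatic.
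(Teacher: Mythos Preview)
Your proof is correct and follows essentially the same approach as the paper's: argue by contradiction, use convexity of each $C_n$ together with $p\in C_n$ to produce a point of $C_n$ lying exactly on a fixed sphere about $p$, invoke compactness of that sphere in finite dimensions to extract a convergent subsequence, and conclude that the limit lies in every $C_n$, contradicting $\bigcap C_n=\{p\}$. Your write-up is in fact a bit more explicit than the paper's (the monotonicity of the diameters, the formula for $r_n$, the tail argument for membership in each $C_m$), but the underlying idea is identical.
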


\begin{proof}

Proceeding towards a contradiction, let us assume $diam(C_n)>d>0$ for all $n\in\mathbb{N}$. Then $C_n$ must contain a point $p_n$ such that $\|p-p_n\|>\frac{d}{2}$ as it easily follows from the triangle inequality. As $C_n$ is convex and it contains $p$ and $p_n$, it also contains the $[p,p_n]$ segment, and on this segment a point $x_n$ satisfying $\|p-x_n\|=\frac{d}{2}$. Now the points $x_n$ all lie on the boundary of the ball with centre $p$ and radius $\frac{d}{2}$. By a consequence of Riesz's lemma, in our finite dimensional space this set is compact, hence the sequence $(x_n)_{n=1}^{\infty}$ has an accumulation point on this boundary. Let us denote it by $x$. As $C_n$ is closed for each $n\in{\mathbb{N}}$ and their sequence is nested, it implies $x\in{C_n}$. As a consequence, $x\in{\bigcap_{n=1}^{\infty}C_n}=\{p\}$, which is clearly a contradiction as the distance of $x$ and $p$ is $\frac{d}{2}$.
\end{proof}

This proposition implies that if we work with a finite dimensional Banach space, it is unnecessary to have the additional limit condition concerning the diameters of the vertical sections. However, one can easily construct counterexamples to Proposition 3.1 if we permit infinite dimensional spaces, for instance we can take the subspace spanned by a countable set of linearly independent vectors as $C_1$, and then reduce this subspace by removing the generators one by one.

\begin{proof}[Proof of Theorem 1.3] The proof is similar to the one given in \cite{ACP} for the more specific case, with some suitable modifications. First, let us assume that $f$ is Baire-$1$, hence there is a sequence of continuous functions $(f_n)_{n=1}^{\infty}$ with pointwise limit $f$. Let us notice that the set $\left\{x : \|f_n(x)-f(x)\|<\frac{1}{k}\right\}$ is in $\Sigma_2$. Indeed, if we let $g_n(x)=f_n(x)-f(x)$, it is also a Baire-$1$ function, and the set we are interested in is $g_n^{-1}\{y : \|y\|<\frac{1}{k}\}$, which is the inverse image of an open ball. Applying Proposition 2.1 yields that $\left\{x : \|f_n(x)-f(x)\|<\frac{1}{k}\right\}$ is in $\Sigma_2$ as we stated. As a consequence, it can be written as the countable union of closed sets $A(n,k,i)\subseteq{X}$:
\begin{displaymath}
\left\{x : \|f_n(x)-f(x)\|<\frac{1}{k}\right\}=\bigcup_{i=1}^{\infty}A(n,k,i).
\end{displaymath}
We will define the subsets $H(n,k,i)$ of $X\times{Y}$ as follows:
\begin{displaymath}
H(n,k,i)=\left\{(x,y) : x\in{A(n,k,i)}, \|y-f_n(x)\|\geq\frac{1}{k}\right\}.
\end{displaymath}
We show that $H(n,k,i)$ is closed. In order to prove it, let us write it as an intersection of two sets which are easier to handle:
\begin{displaymath}
H(n,k,i)=\left[A(n,k,i)\times{Y}\right]\cap\left\{(x,y):\|y-f_n(x)\|\geq\frac{1}{k}\right\}.
\end{displaymath}
The first one of these sets on the right hand side is clearly closed in $X\times{Y}$ as $A(n,k,i)$ was closed in $X$, hence it suffices to prove that the second set on the right hand side is also closed. Let us define the following function $h_n:X\times{Y}\rightarrow\mathbb{R_+}$, where $\mathbb{R_+}$ denotes the nonnegative halfline:
\begin{displaymath}
h_n(x,y)=\|y-f_n(x)\|.
\end{displaymath}
Our claim is that the continuity of $f_n$ implies the continuity of $h_n$. To prove this, we need to show that the inverse image of an open set $G\subseteq\mathbb{R_+}$ under $h_n$ is open in $X\times{Y}$. Thus let us assume $h_n(x_0,y_0)\in{G}$ for some $(x_0,y_0)\in{X\times{Y}}$, which yields for some $\varepsilon>0$ its neighborhood of radius $\varepsilon$ is the subset of $G$, that is $B\left(h_n(x_0,y_0),\varepsilon\right)\subseteq{G}$. We need that $h_n(x,y)$ is also in $G$ if $(x,y)$ is an element of a suitable neighborhood $U$ of $(x_0,y_0)$. We state this holds if we regard the following neighborhood:
\begin{displaymath}
U=f_n^{-1}\left(B\left(f_n(x_0),\frac{\varepsilon}{2}\right)\right)\times{B\left(y_0,\frac{\varepsilon}{2}\right)}
\end{displaymath}
By the continuity of $f_n$ it is indeed a neighborhood of $(x_0,y_0)$ as $f_n^{-1}\left(B\left(f_n(x_0),\frac{\varepsilon}{2}\right)\right)$ is an open subset of $X$. Furthermore, if $(x,y)\in{U}$, by the triangle inequality we have
\begin{displaymath}
\|y-f_n(x)\|\leq{\|y-y_0\|+\|y_0-f_n(x_0)\|+\|f_n(x_0)-f_n(x)\|}<\varepsilon+\|y_0-f_n(x_0)\|,
\end{displaymath}
and
\begin{displaymath}
\|y-f_n(x)\|\geq{-\|y-y_0\|+\|y_0-f_n(x_0)\|-\|f_n(x_0)-f_n(x)\|}>-\varepsilon+\|y_0-f_n(x_0)\|,
\end{displaymath}
which implies $h_n(x,y)\in{B\left(h_n(x_0,y_0),\varepsilon\right)\subseteq{G}}$.

\noindent Thus $h_n$ is continuous indeed, yielding $\left\{(x,y):\|y-f_n(x)\|\geq\frac{1}{k}\right\}=h_n^{-1}\left(\left[\frac{1}{k},\infty\right)\right)$ is a closed set of $X\times{Y}$. By our previous remarks it implies that $H(n,k,i)$ is also closed.

\medskip

The set of such sets $H(n,k,i)$ is countable thus we can take an enumeration $H_1$, $H_2$, ..., of them. Let us denote by $G_j^*$ the complement of $H_j$ in $X\times{Y}$, that is an open set. Furthermore, one can easily check that $G_j^*$ is an open strip, that is the $G_j^*(x)$ vertical section is convex for each $j\in\mathbb{N}$ and $x\in{X}$. Indeed, by the construction of $G_j^*$, this vertical section is either the complete space $Y$ or the ball of radius $\frac{1}{k}$ centered at $f_n(x)$ for some $k\in\mathbb{N}$ and $x\in{X}$. However, balls are convex in Banach spaces, hence $G_j^*$ is an open strip. It implies $G_j=\bigcap_{l=1}^{j}G_l^*$ is also an open strip. Furthermore, the sequence $(G_j)_{j=1}^{\infty}$ is nested and $diam(G_j(x))$ tends to $0$ for each $x\in{X}$. Indeed, when we constructed $G_j$, we took the intersection of the complements of some sets $H(n,k,i)$. A vertical section of this complement is either the entire $Y$ or a ball with diameter $\frac{2}{k}$. But as all $x\in{X}$ appears in $A(n,k,i)$ for any $k$, for some $i$ and large enough $n$, this implies that $diam(G_j(x))\leq\frac{2}{k}$ for large enough $j$. As a consequence, $diam(G_j(x))\rightarrow{0}$. Hence if we could verify that the intersection of the open strips $(G_j^*)_{j=1}^{\infty}$ equals $gr(f)$, that would conclude the proof. But the proof of this fact is quite straightforward, we can check two inclusions. First, $(x,f(x))\in{G_j^*}$ for any $j\in\mathbb{N}$ and $x\in{X}$, implying $gr(f)\subseteq\bigcap_{j=1}^{\infty}{G_j}$. In order to show this, let us recall that the complement of $G_j^*$ is $H_j=H(n,k,i)$ for some $n,k,i\in\mathbb{N}$. We need $(x,f(x))\notin{H(n,k,i)}$. Proceeding towards a contradiction, let us assume $(x,f(x))\in{H(n,k,i)}$, yielding $x\in{A(n,k,i)}$. Then by the definition of $A(n,k,i)$, the inequality $\|f_n(x)-f(x)\|<\frac{1}{k}$ holds. However, $\|f(x)-f_n(x)\|\geq\frac{1}{k}$ by the definition of $H(n,k,i)$, a contradiction. Thus $gr(f)\subseteq\bigcap_{j=1}^{\infty}{G_j}$. For the other inclusion, it suffices to prove that for any $x\in{X}$ and $y\in{Y}$ distinct from $f(x)$, we have $(x,y)\in{H(n,k,i)}$ for suitable $n,k,i\in\mathbb{N}$. In order to verify this, choose $k$ such that $\frac{1}{k}<\frac{\|y-f(x)\|}{2}$ and $n$ such that $\|f_n(x)-f(x)\|<\frac{1}{k}$. As $f$ is the pointwise limit of $(f_n)_{n=1}^{\infty}$, it is possible. Then by definition there exists $i$ such that $x\in{A(n,k,i)}$. Furthermore, $\|y-f_n(x)\|\geq\|y-f(x)\|-\|f_n(x)-f(x)\|>\frac{1}{k}$ by the triangle-inequality, implying $(x,y)\in{H(n,k,i)}$, which concludes the proof. Thus $\bigcap_{j=1}^{\infty}(G_j)=gr(f)$, we finished the proof of this direction.

For the other direction, let us assume $gr(f)=\bigcap_{j=1}^{\infty}{G_j}$ where for each $j$ the set $G_j$ is an open strip. We can also assume that their sequence is nested as the finite intersection of open sets is open and any intersection of convex sets is convex. Thus $G_{j+1}\subseteq{G_j}$ for any $j$. Let us define $F_j$ as it follows:
\begin{displaymath}
F_j=\bigcup_{x\in{X}}\overline{G_j(x)},
\end{displaymath}
where the overline means the closure. Hence $F_j$ stands for the closure by coordinates. Regard it as a multivalued function defined on $X$ with range $2^Y$, whose values are naturally the vertical sections of the set. Then this multivalued function has nonempty closed, convex values. Furthermore, we can easily show that $F_j$ is lower hemicontinuous: let us assume $V\cap{F_j(x)}$ is nonempty for some open set $V$ of $Y$ and $x\in{X}$. Since $F_j(x)$ is the closure of the open set $G_j(x)$, we have $V\cap{G_j(x)}$ is nonempty. Let $y\in{Y}$ be one of its elements. As $G_j$ is open, it contains a neighborhood of $(x,y)$. This neighborhood intersects $X\times\{y\}$ in a set whose projection to $X$ is open and suitable for us in the definition of lower hemicontinuity as one can easily check. Thus $F_j:X\rightarrow{2^Y}$ is a lower hemicontinuous function with nonempty closed, convex values. By the Michael selection theorem there exists a continuous selection $f_j:X\rightarrow{Y}$ in $F_j$. Furthermore, as the intersection of the sets $F_j(x)$ is only $f(x)$ and their diameter tends to $0$, we obtain $f_j(x)\rightarrow{f(x)}$. Hence $f$ is the pointwise limit of continuous functions, meaning $f$ is Baire-1. \end{proof}

\section{Higher Baire classes}

\begin{proof}[Proof of Theorem 1.6] The proof has a similar structure to the proof of Theorem 1.3, we just have to be more careful with the sets in higher Borel classes and make some slight, but necessary changes. As $f$ is Baire-$\alpha$, there is a sequence of functions $(f_n)_{n=1}^{\infty}$ with pointwise limit $f$, where $f_n$ is Baire-$\alpha_n$ for some $\alpha_n<\alpha$, and if $\alpha$ is a successor ordinal, we can assume $\alpha_n=\alpha-1$. Proposition 2.1 easily yields that the set $\left\{x : d_Y\left(f_n(x),f(x)\right)<\frac{1}{k}\right\}$ is in $\Sigma_{\alpha+1}$. To verify this, we show that if the functions $g_1,g_2: X \to Y$ are Baire-$\alpha$, then the function $\rho_{g_1,g_2}:X\to \mathbb{R}_+$ defined by $\rho_{g_1,g_2}(x)=d_Y\left(g_1(x),g_2(x)\right)$ is also Baire-$\alpha$. We proceed by transfinite induction: if $\alpha=0$, that is our functions are continuous, then our claim can be proven as the similar statement in the proof of Theorem 1.3. Furthermore, if we have $\alpha>0$, then $g_1$ is the pointwise limit of the functions $(g_{1,n})_{n=1}^{\infty}$ and $g_2$ is the pointwise limit of the functions $(g_{2,n})_{n=1}^{\infty}$, such that these functions are in lower Baire classes. Thus by the continuity of the metric $d_Y$, we have
\begin{displaymath}
\rho_{g_1,g_2}(x)=\lim_{n\to\infty}d_Y\left(g_{1,n}(x),g_{2,n}(x)\right)=\lim_{n \to\infty}\rho_{g_1,g_2,n}(x).
\end{displaymath}
However, the induction hypothesis easily yields that each of the functions $\rho_{g_1,g_2,n}$ are in lower Baire classes than Baire-$\alpha$. Thus $\rho_{g_1,g_2}$ is a Baire-$\alpha$ function, as we stated. As a consequence,
\begin{displaymath}
\left\{x : d_Y\left(f_n(x),f(x)\right)<\frac{1}{k}\right\}=\rho_{g_1,g_2}^{-1}\left(\left[0,\frac{1}{k}\right)\right)
\end{displaymath}
is in $\Sigma_{\alpha+1}$ by Proposition 2.1, as we consider the inverse image of an open set in $\mathbb{R}_+$ under a Baire-$\alpha$ function. Thus it can be written as the countable union of $\Pi_\alpha$ sets $A(n,k,i)\subseteq{Y}$:
\begin{displaymath}
\left\{x : d_Y\left(f_n(x),f(x)\right)<\frac{1}{k}\right\}=\bigcup_{i=1}^{\infty}A(n,k,i).
\end{displaymath}
We define the subsets $H(n,k,i)$ of $X\times{Y}$ as follows:
\begin{displaymath}
H(n,k,i)=\left\{(x,y) : x\in{A(n,k,i)}, d_Y\left(y,f_n(x)\right)\geq\frac{1}{k}\right\}.
\end{displaymath}

\noindent We state $H(n,k,i)$ is in $\Pi_\alpha$. The proof of this claim starts with the same reformulation, that is we write $H(n,k,i)$ as the intersection of two simpler sets:
\begin{displaymath}
H(n,k,i)=\left[A(n,k,i)\times{Y}\right]\cap\left\{(x,y):d_Y\left(y,f_n(x)\right)\geq\frac{1}{k}\right\}.
\end{displaymath}
The first one of these sets on the right hand side is clearly in $\Pi_\alpha$ in $X\times{Y}$ as $A(n,k,i)$ was in $\Pi_\alpha$ in $X$, hence it suffices to prove that the second set on the right hand side is also in $\Pi_\alpha$. Let us define the following function $h_n:X\times{Y}\rightarrow\mathbb{R_+}$:
\begin{displaymath}
h_n(x,y)=d_Y\left(y,f_n(x)\right).
\end{displaymath}
One can easily prove by transfinite induction on $\alpha_n$ that if $f_n$ is Baire-$\alpha_n$ then $h_n$ is also Baire-$\alpha_n$: the base case $\alpha_n=0$, where $f_n$ is continuous, can be verified exactly as we did it in the proof of Theorem 1.3, we only have to replace the norms of the differences in the inequalities with the respective distances. Now if $f_n$ is the pointwise limit of a sequence of functions $(\phi_{n,m})_{m=1}^{\infty}$ from lower Baire classes, then $h_n$ is the pointwise limit of the sequence of functions $d_Y\left(y,\phi_{n,m}(x)\right)_{m=1}^{\infty}$, and for these functions the inductive hypothesis can be used. Hence $h_n$ is Baire-$\alpha_n$, yielding $\left\{(x,y):d_Y\left(y,f_n(x)\right)\geq\frac{1}{k}\right\}$ is in $\Pi_{\alpha_n+1}$, and as a consequence, it is also in $\Pi_\alpha$, as we can separate the cases of successor and limit ordinals as in the proof of Proposition 2.1. Thus $H(n,k,i)$ is in $\Pi_\alpha$.

At this point, we can proceed exactly as we did in the proof of Theorem 1.3. We can take an enumeration $H_1, H_2, ...$ of the sets $H(n,k,i)$ and define $G_j$ as $(X\times{Y})\setminus\bigcup_{l=1}^{j}H_l$. Then these sets are in $\Sigma_\alpha$ and their intersection is $gr(f)$. Furthermore, if $Y$ is a Fr\'echet space, these sets are also $\Sigma_\alpha$ strips as balls are convex sets in Fr\'echet spaces. \end{proof}

\begin{proof}[Proof of Theorem 1.7]
For the direction in which we assume that $f$ is Baire-$\alpha$, we can refer to the proof of Theorem 1.6, the only detail we have to check that is the third condition is also satisfied. In general, let us denote the projection of a set $C\subseteq{X\times{Y}}$ to $X$ by $\pi(C)$, and let us denote the projection of $(X\times{U})\cap{G_n}$ to $X$ for the sake of simplicity by $\pi_n^*(U)$. Let us define the sets $A(n,k,i)$ and $H(n,k,i)$, and then the sequences $(H_j)_{j=1}^{\infty}$ and $(G_j)_{j=1}^{\infty}$ as we did in that proof. Namely, if $(f_n)_{n=1}^{\infty}$ is the sequence of functions from lower Baire classes with pointwise limit $f$, then
\begin{displaymath}
\left\{x : d_Y(f_n(x),f(x))<\frac{1}{k}\right\}=\bigcup_{i=1}^{\infty}A(n,k,i), \text{where } A(n,k,i)\in\Pi_\alpha,
\end{displaymath}
\begin{displaymath}
H(n,k,i)=\left[A(n,k,i)\times{Y}\right]\cap\left\{(x,y):d_Y(y,f_n(x))\geq\frac{1}{k}\right\},
\end{displaymath}
\noindent $(H_j)_{j=1}^{\infty}$ is the enumeration of these sets $H(n,k,i)$, and
\begin{displaymath}
G_j=(X\times{Y})\setminus\bigcup_{l=1}^{j}H_l=\bigcap_{l=1}^{j}(X\times{Y})\setminus{H_l}.
\end{displaymath}
\noindent Our goal is to prove that $\pi_j^*(U)$ is in $\Sigma_\alpha$ for each open subset $U$ of $Y$. Assume that $G_j$ can be decomposed as the following:
\begin{displaymath}
G_j=(X\times{Y})\setminus\bigcup_{l=1}^{j}H(n_l,k_l,i_l)=\bigcap_{l=1}^{j}(X\times{Y})\setminus{H(n_l,k_l,i_l)}.
\end{displaymath}
\noindent Now we can divide each $(X\times{Y})\setminus{H(n_l,k_l,i_l)}$ into two parts with disjoint projections to $X$:
\begin{displaymath}
(X\times{Y})\setminus{H(n_l,k_l,i_l)}=\left[(X\setminus{A(n_l,k_l,i_l}))\times{Y}\right] \cup \left[(A(n_l,k_l,i_l)\times{Y})\setminus{H(n_l,k_l,i_l)}\right]=V_{l,1} \cup V_{l,2},
\end{displaymath}
\noindent yielding
\begin{displaymath}
G_j=\bigcap_{l=1}^{j}(X\times{Y})\setminus{H(n_l,k_l,i_l)}=\bigcap_{l=1}^{j}(V_{l,1} \cup V_{l,2}).
\end{displaymath}
\noindent By distributivity, we can replace this intersection of unions by a union of intersections:
\begin{displaymath}
\bigcap_{l=1}^{j}(V_{l,1} \cup V_{l,2})=\bigcup_{(\theta_1,...\theta_j) \in \{1,2\}^j} \bigcap_{l=1}^{j}V_{l,\theta_l}.
\end{displaymath}
\noindent What is intriguing about this expression, that is the projections of the sets $\bigcap_{l=1}^{j}V_{l,\theta_l}$ to $X$ are clearly disjoint as two such intersection differs in at least one $\theta$-coordinate, and the projections $\pi(V_{l,1})$ and $\pi(V_{l,2})$ are disjoint. As a consequence, the projection of the union $$\bigcup_{(\theta_1,...\theta_j) \in \{1,2\}^j} \bigcap_{l=1}^{j}V_{l,\theta_l}$$ to $X$ equals the union of the projections, hence
\begin{equation}
\pi_j^*(U)=\pi\left((X\times{U})\cap{G_j}\right)=\bigcup_{(\theta_1,...\theta_j) \in \{1,2\}^j}\pi\left((X\times{U})\cap\bigcap_{l=1}^{j}V_{l,\theta_l}\right).
\end{equation}
\noindent We would like to show that this set is in $\Sigma_\alpha$. Let us consider one of these sets $$\pi\left((X\times{U})\cap\bigcap_{l=1}^{j}V_{l,\theta_l}\right)$$ and take a closer look at $\bigcap_{l=1}^{j}V_{l,\theta_l}$. Amongst these sets, certain ones are of the type $V_{l,1}$, others are of the type $V_{l,2}$. Let us denote the set of indices belonging to the first type by $J_1$, and the set of indices belonging to the second type by $J_2$, yielding
\begin{displaymath}
\pi\left((X\times{U})\cap\bigcap_{l=1}^{j}V_{l,\theta_l}\right)=\pi\left((X\times{U})\cap\bigcap_{l\in{J_1}}V_{l,1}\cap\bigcap_{l\in{J_2}}V_{l,2}\right).
\end{displaymath}
\noindent In this expression, $V_{l,1}=\left[(X\setminus{A(n_l,k_l,i_l)})\times{Y}\right]$ for $l\in{J_1}$, meaning $V_{l,1}$ contains the whole space $Y$ above $X\setminus{A(n_l,k_l,i_l)}$. As a consequence, one can easily verify that
\begin{equation}
\pi\left((X\times{U})\cap\bigcap_{l\in{J_1}}V_{l,1}\cap\bigcap_{l\in{J_2}}V_{l,2}\right)=\bigcap_{l\in{J_1}}(X\setminus{A(n_l,k_l,i_l)})\cap\pi\left((X\times{U})\cap\bigcap_{l\in{J_2}}V_{l,2}\right).
\end{equation}
\noindent Let us recall the definiton of $V_{l,2}$:
\begin{displaymath}
\pi\left((X\times{U})\cap\bigcap_{l\in{J_2}}V_{l,2}\right)=\left\{x: x\in\bigcap_{l\in{J_2}}{A}(n_l,k_l,i_l), U\cap\bigcap_{l\in{J_2}}B_Y\left(f_{n_l}(x),\frac{1}{k_l}\right)\neq\emptyset\right\}.
\end{displaymath}
\noindent Using these identities, we can reformulate (1), yielding $\pi_j^*(U)$ equals the following:
\begin{equation}
\bigcup_{J_1,J_2}\left(\bigcap_{l\in{J_1}}(X\setminus{A(n_l,k_l,i_l)})\cap\left\{x: x\in\bigcap_{l\in{J_2}}{A}(n_l,k_l,i_l), U\cap\bigcap_{l\in{J_2}}B_Y\left(f_{n_l}(x),\frac{1}{k_l}\right)\neq\emptyset\right\}\right).
\end{equation}
\noindent Now we will show that the condition $x\in\bigcap_{l\in{J_2}}{A}(n_l,k_l,i_l)$ might be omitted from this expression for each $J_1,J_2$ without changing the union. This omission extends each of the sets
\begin{displaymath}
\bigcap_{l\in{J_1}}(X\setminus{A(n_l,k_l,i_l)})\cap\left\{x: x\in\bigcap_{l\in{J_2}}{A}(n_l,k_l,i_l), U\cap\bigcap_{l\in{J_2}}B_Y\left(f_{n_l}(x),\frac{1}{k_l}\right)\neq\emptyset\right\}
\end{displaymath}
\noindent to
\begin{equation}
\bigcap_{l\in{J_1}}(X\setminus{A(n_l,k_l,i_l)})\cap\left\{x: U\cap\bigcap_{l\in{J_2}}B_Y\left(f_{n_l}(x),\frac{1}{k_l}\right)\neq\emptyset\right\},
\end{equation}
\noindent however, as we will show the increment is contained by other sets of the union in (3), yielding this union remains the same. Indeed, as $J$ runs over the subsets of $J_2$, the sets $$\bigcap_{l\in{J}}(X\setminus{A(n_l,k_l,i_l)})\cap\bigcap_{l\in{J_2\setminus{J}}}A(n_l,k_l,i_l)$$ give a natural partition of $X$. As a consequence, the set in (4) can be expressed as it follows, by taking the intersection with each of the elements of this partition and then forming their union:
\begin{equation}
\bigcup_{J\subseteq{J_2}}\left(\bigcap_{l\in{J\cup{J_1}}}(X\setminus{A(n_l,k_l,i_l)})\cap\bigcap_{l\in{J_2\setminus{J}}}A(n_l,k_l,i_l)\cap\left\{x: U\cap\bigcap_{l\in{J_2}}B_Y\left(f_{n_l}(x),\frac{1}{k_l}\right)\neq\emptyset\right\}\right).
\end{equation}
Taking the intersection of the sets $B_Y\left(f_{n_l}(x),\frac{1}{k_l}\right)$ only for $J_2\setminus{J}$ clearly extends this set, and $\bigcap_{l\in{J_2\setminus{J}}}A(n_l,k_l,i_l)$ can be moved inside $\left\{x: U\cap\bigcap_{l\in{J_2}}B_Y\left(f_{n_l}(x),\frac{1}{k_l}\right)\neq\emptyset\right\}$, yielding the set in (4) is contained by
\begin{displaymath}
\bigcup_{J\subseteq{J_2}}\left(\bigcap_{l\in{J_1\cup{J}}}(X\setminus{A(n_l,k_l,i_l)})\cap\left\{x : x\in\bigcap_{l\in{J_2\setminus{J}}}{A}(n_l,k_l,i_l), U\cap\bigcap_{l\in{J_2\setminus{J}}}B_Y\left(f_{n_l}(x),\frac{1}{k_l}\right)\neq\emptyset\right\}\right).
\end{displaymath}
\noindent Now we may notice that each of the unioned sets in this expression appears in the union in (3), which verifies our statement: we can make the omissions for any $J_1$ and $J_2$ without changing the union there. In other words, $\pi_j^*(U)$ is also the union of these modificated sets, that is
\begin{displaymath}
\pi_j^*(U)=\bigcup_{J_1,J_2}\left(\bigcap_{l\in{J_1}}(X\setminus{A(n_l,k_l,i_l)})\cap\left\{x: U\cap\bigcap_{l\in{J_2}}B_Y\left(f_{n_l}(x),\frac{1}{k_l}\right)\neq\emptyset\right\}\right).
\end{displaymath}

\noindent As it is a finite union, it suffices to prove about each of the unioned sets that they are in $\Sigma_\alpha$, that is
\begin{displaymath}
\bigcap_{l\in{J_1}}(X\setminus{A(n_l,k_l,i_l)})\cap\left\{x : U\cap\bigcap_{l\in{J_2}}B_Y\left(f_{n_l}(x),\frac{1}{k_l}\right)\neq\emptyset\right\}\in\Sigma_\alpha.
\end{displaymath}
\noindent The sets $X\setminus{A(n_l,k_l,i_l)}$ are also in $\Sigma_\alpha$, therefore it would be sufficient to prove the same about $\left\{x : U\cap\bigcap_{l\in{J_2}}B_Y\left(f_{n_l}(x),\frac{1}{k_l}\right)\neq\emptyset\right\}$. The intersection which we regard in this set is the intersection of a finite collection of open sets, hence it is also open. Furthermore, $U$ is separable as a subspace of the separable space $Y$. Thus it contains a countable dense set $\{u_1,u_2,...\}$. As a consequence, $U\cap\bigcap_{l\in{J_2}}B_Y\left(f_{n_l}(x),\frac{1}{k_l}\right)\neq\emptyset$ holds if and only if there exists some $u_t$ for $t\in{\mathbb{N}}$ such that $u_t\in\bigcap_{l\in{J_2}}B_Y\left(f_{n_l}(x),\frac{1}{k_l}\right)$, thus
\begin{displaymath}
\left\{x : U\cap\bigcap_{l\in{J_2}}B_Y\left(f_{n_l}(x),\frac{1}{k_l}\right)\neq\emptyset\right\}=\bigcup_{t=1}^{\infty}\left\{x :u_t\in\bigcap_{l\in{J_2}}B_Y\left(f_{n_l}(x),\frac{1}{k_l}\right)\right\}.
\end{displaymath}
\noindent For some $x\in{X}$, the relation $u_t\in{B_Y\left(f_{n_l}(x),\frac{1}{k_l}\right)}$ holds if and only if $f_{n_l}(x)\in{B_Y\left(u_t,\frac{1}{k_l}\right)}$ by symmetry. Hence
\begin{displaymath}
\bigcup_{t=1}^{\infty}\left\{x :u_t\in\bigcap_{l\in{J_2}}B_Y\left(f_{n_l}(x),\frac{1}{k_l}\right)\right\}=\bigcup_{t=1}^{\infty}\bigcap_{l\in{J_2}}\left\{x :f_{n_l}(x)\in{B_Y\left(u_t,\frac{1}{k_l}\right)}\right\}=\bigcup_{t=1}^{\infty}\bigcap_{l\in{J_2}}S(t,l).
\end{displaymath}
\noindent On the right hand side, each set $S(t,l)$ is the inverse image of an open set under $f_{n_l}$ which is Baire-$\alpha_{n_l}$, where $\alpha_{n_l}<\alpha$. Thus each $S(t,l)$ is in $\Sigma_\alpha$ by Proposition 2.1 as $X\times{Y}$ is metrizable, yielding that it is perfect. Hence if we take the finite intersection for $l\in{J_2}$ and then the countable union for $t=1,2,...$, we will still have a set in $\Sigma_\alpha$ and as we have already seen it concludes the proof of the first direction.

\medskip

For the other direction, let us assume $gr(f)=\bigcap_{j=1}^{\infty}{G_j}$ where the set $G_j$ is in $\Sigma_\alpha$ for each $j$, their sequence is nested, and they satisfy the three conditions of the theorem. Let us define $F_j$ as it follows:
\begin{displaymath}
F_j=\bigcup_{x\in{X}}\overline{G_j(x)},
\end{displaymath}
thus $F_j$ is the closure by coordinates. If we regard it as a multivalued function defined on $X$ with range $2^Y$, whose values are naturally the vertical sections of the set, we can easily verify that it satisfies the conditions of Proposition 2.2. Indeed, it has clearly nonempty, closed values, and as the projection of $(X\times{U})\cap{G_n}$ to $X$ is in $\Sigma_\alpha$ for each open subset $U$ of $Y$, the inverse image $F_j^{-1}(U)$ is in $\Sigma_\alpha$ for the open subsets of $Y$. Hence $F_j$ has a Borel-$\alpha$ selection $f_j$. As $\alpha$ is a successor ordinal, $\alpha-1$ makes sense and Proposition 2.3 can be applied, yielding $f_j$ is Baire-$(\alpha-1)$. The conclusion is the same as it was in the proof of Theorem 1.3: as the intersection of the sets $F_j(x)$ is only $\{f(x)\}$ and their diameter tends to $0$, $f_j(x)\rightarrow{f(x)}$ must hold, and as a consequence, $f$ is the pointwise limit of Baire-$(\alpha-1)$ functions, meaning $f$ is Baire-$\alpha$. \end{proof}

\section{Accumulation points of graphs}

Before we start the proof of Theorem 1.9, we would like to remark that the conditions concerning $T$ are clearly necessary, even if we do not require $f$ to be Baire-2:

\begin{prop} In the setting of the first case of Theorem 1.9, if a subset $T$ of $X\times{Y}$ equals $L_f$ for a function $f:X\rightarrow{Y}$, then $T$ is closed and $T\cap(\{x\}\times{Y})\neq\emptyset$ for each $x\in{X}$.

Furthermore, in the setting of the second case of Theorem 1.9, if a subset $T$ of $X\times{Y}$ equals $L_f$ for a function $f:X\rightarrow{Y}$, then $T$ is closed and there is a countable set $D\subseteq{X}$ such that $T\cap(\{x\}\times{Y})\neq\emptyset$ for each $x\in{X\setminus{D}}$. \end{prop}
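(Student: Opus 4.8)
The plan is to establish the three assertions essentially independently, dispatching the closedness of $T$ and the first case quickly and then concentrating on the countability statement in the second case. First I would note that $T=L_f$ is nothing but the set of limit points (the derived set) of $gr(f)$ inside $X\times Y$, which is a metric space since $X$ is metrizable and the Fr\'echet space $Y$ is metrizable. As the derived set of an arbitrary subset of a $T_1$ space is closed, $T$ is closed in both cases with no further argument.

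For the first case, fix $x_0\in X$. Since $X$ has no isolated points, there is a sequence $x_n\to x_0$ with $x_n\neq x_0$. As $Y$ is compact metric it is sequentially compact, so some subsequence satisfies $f(x_{n_k})\to y_0\in Y$; then the points $(x_{n_k},f(x_{n_k}))\in gr(f)$ are distinct from $(x_0,y_0)$ and converge to it, whence $(x_0,y_0)\in L_f=T$ and $T\cap(\{x_0\}\times Y)\neq\emptyset$. Thus the first case uses only compactness of $Y$ together with the absence of isolated points.

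The second case is where the work lies. Setting $D_0=\{x\in X:T\cap(\{x\}\times Y)=\emptyset\}$, the claim is precisely that $D_0$ is countable, after which $D=D_0$ suffices. I would fix an exhaustion $K_1\subseteq K_2\subseteq\cdots$ of $Y$ by compact sets with union $Y$, chosen so that every compact subset of $Y$ lies in some $K_m$; this is possible because a $\sigma$-compact, non-compact Fr\'echet space is finite-dimensional, hence locally compact and hemicompact. Writing $E_m=f^{-1}(K_m)$, the elementary observation is that $(x_0,y_0)\in L_f$ forces a sequence $x_n\to x_0$, $x_n\neq x_0$, with $f(x_n)\to y_0$, so that $\{f(x_n)\}\cup\{y_0\}$ is a compact set contained in some $K_m$; conversely a convergent image-sequence produces such a limit point. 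Consequently $x_0\in D_0$ if and only if $x_0$ is not a limit point of any $E_m$, i.e. $D_0=X\setminus\bigcup_m(E_m)'$, where $(E_m)'$ denotes the derived set. Each $(E_m)'$ is closed, so $D_0$ is a $G_\delta$ subset of $X$.

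Finally I would use that $X$ is $\sigma$-compact, writing $X=\bigcup_p C_p$ with each $C_p$ compact. Then $D_0\cap C_p$ is $G_\delta$ in the compact metric space $C_p$, hence Polish; if it were uncountable, the perfect set property would furnish a Cantor set $Q\subseteq D_0\cap C_p$. But no point of $Q\subseteq D_0$ is a limit point of any $E_m$, so each $E_m\cap Q$ is relatively discrete and closed in the compact set $Q$, hence finite; then $Q=\bigcup_m(E_m\cap Q)$ would be countable, contradicting the uncountability of a Cantor set. Therefore each $D_0\cap C_p$, and hence $D_0$, is countable. The main obstacle is exactly this last step: the conceptual content is that $f$ must escape to infinity along every approach to each point of $D_0$, and the clean way to turn this into countability is to recognize $D_0$ as a $G_\delta$ set and invoke the perfect set property, the two delicate points being the choice of a compact exhaustion of $Y$ absorbing all compacta and the combinatorial fact that a Cantor set lying in $D_0$ would be exhausted by finitely-many-per-level pieces $E_m\cap Q$.
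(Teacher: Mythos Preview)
Your argument is correct. The closedness of $T$ and the first case are handled essentially as in the paper. For the second case, however, the paper takes a much shorter route: assuming $D_0$ is uncountable, it uses the $\sigma$-compactness of $X$ and of $Y$ together with a double pigeonhole to find compact sets $C_X\subseteq X$ and $C_Y\subseteq Y$ for which $D^*=\{x\in D_0\cap C_X:f(x)\in C_Y\}$ is still uncountable; since $X$ is second countable, $D^*$ must contain one of its own accumulation points $d$, and any sequence in $D^*\setminus\{d\}$ converging to $d$ has $f$-images trapped in the compact set $C_Y$, so a subsequential limit produces a point of $L_f$ above $d$, contradicting $d\in D_0$. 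This bypasses the perfect set theorem, the $G_\delta$ description of $D_0$, and any appeal to hemicompactness or to the finite-dimensionality of $Y$. Your approach does extract more structure along the way---the identity $D_0=X\setminus\bigcup_m (E_m)'$ and the side remark that a $\sigma$-compact Fr\'echet space is finite-dimensional are both of independent interest---but none of that machinery is needed for the bare countability conclusion, which follows from the elementary fact that an uncountable subset of a second countable space contains one of its own accumulation points.
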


\begin{proof} As $L_f$ is the set of the accumulation points of $gr(f)$, it must be closed in both cases. On the other hand, in the first case, if we consider any $x\in{X}$, by our conditions there is a sequence $(x_n)_{n=1}^{\infty}$ with elements from $X$ distinct from $x$ and with limit $x$. Thus by the compactness of $Y$, for any $f$ the sequence $\left(f\left(x_n\right)\right)_{n=1}^{\infty}$ has a limit point, implying the sequence $\left(x_n,f\left(x_n\right)\right)_{n=1}^{\infty}$ has a limit point in $\{x\}\times{Y}$. Hence if $T=L_f$, the set $T$ has to intersect any vertical line in the first case.

In the second case, proceeding towards a contradiction, let us assume the set $D$ of points in $X$ satisfying $T\cap(\{x\}\times{Y})=\emptyset$ is uncountable and there exists a function $f:X\rightarrow{Y}$ for which $T=L_f$ holds. As both $X$ and $Y$ is $\sigma$-compact, it implies the existence of compact sets $C_X\subseteq{X}$ and $C_Y\subseteq{Y}$ such that $C_X\cap{D}$ is uncountable and the cardinality of $D^*=\{x : x\in{C_X\cap{D}}, f(x)\in{C_Y}\}$ is also uncountable. Thus by the separability of $X$, the set $D^*$ contains one of its accumulation points, $d$. Therefore there exists a sequence $(d_i)$ in $D^*$, ($d_i\neq{d}$) with limit $d$. Since all the elements of the sequence $(f(d_i))$ are in the compact set $C_Y$, it has a convergent subsequence, therefore $L_f(d)$ cannot be empty, while $T(d)$ is, a contradiction.  \end{proof}

However, Theorem 1.9 states for such a set $T$ we have a Baire-2 function satisfying $L_f=T$, yielding the following:

\begin{kov} In the setting of any case of Theorem 1.9, if a subset $T$ of $X\times{Y}$ equals $L_f$ for a function $f:X\rightarrow{Y}$, then there exists a Baire-2 function such that $L_f=T$. \end{kov}

\begin{proof}[Proof of Theorem 1.9] Let us regard the first case. Consider a metric on $X$. By Lemma 2.5, there exists a countable set $A\subseteq{X}$ and a function $f_0:A\rightarrow{Y}$ satisfying $L_{f_0}=T$. We wish to extend this function to $f:X\rightarrow{Y}$ such that $f$ is Baire-2 without making $L_f$ larger. In order to do this, define a multifunction $F:X\rightarrow{2^{Y}}$ the following way:
\[
 F(x) =
  \begin{cases} 
      \hfill \{f_0(x)\}    \hfill & \text{ if } x\in{A} \\
      \hfill T\cap(\{x\}\times{Y}) \hfill & \text{ if } x\in{X\setminus{A}}. \\
  \end{cases}
\]
As $T$ is closed and its vertical sections are nonempty, $F$ has nonempty closed values. Furthermore, $F^{-1}(G)$ is in $\Sigma_{3}$ for each open subset $G$ of $Y$. Indeed, $T\cap(X\times{G})$ is a set in $\Sigma_{2}$. Next we show that $\pi(T\cap(X\times{G}))$ is also in $\Sigma_{2}$. Let us recall that as $X\times{Y}$ is $\sigma$-compact, any closed set is the union of countably many compact sets, implying any set in $\Sigma_{2}$ is also the union of countably many compact sets. However, the projection of a compact set is obviously compact, thus closed. Hence $\pi(T\cap(X\times{G}))$ is in $\Sigma_{2}$ as we stated. Furthermore, one can easily verify that $F^{-1}(G)$ and $\pi(T\cap(X\times{G}))$ can differ only in the points of $A$, because if we regard $T$ as a multifunction whose values are its vertical sections, $T$ and $F$ differ only in $A$. Thus $F^{-1}(G)$ differs only in a countable set from a set in $\Sigma_{2}$, yielding it is in $\Sigma_{3}$: indeed, a countable set is always in $\Sigma_{2}$, thus if we add a countable set to $\pi(T\cap(X\times{G}))$ we obtain another set in $\Sigma_{2}$, while removing a countable set is equivalent to intersecting with its complement, which is in $\Pi_{2}$. Hence the set we are interested in is the intersection of a set in $\Sigma_{2}$ and a set in $\Pi_{2}$, which are both in $\Sigma_{3}$ as $X$ is metrizable. As a consequence, the intersection is also in $\Sigma_{3}$, as we stated. Hence $F$ satisfies all the conditions of Proposition 2.2, yielding it admits a Borel-$3$ selection $f$. This function $f$ is also Baire-$2$ since the conditions of Theorem 1.9 satisfy the conditions of Proposition 2.3. What remains to show that is $L_f=T$. We have already seen $T\subseteq{L_f}$ as $T={L_{f_0}}$ by the construction of $f_0$ and $L_{f_0}\subseteq{L_f}$ clearly holds. For the other inclusion, we only have to verify that there is no sequence in $gr(f)$ with limit outside of $T$. However, in that case there would be such a sequence in $gr\left(f_0\right)$ as every point of $gr(f)$ is in the closed set $T$, except for the ones in $gr\left(f_0\right)$. Nevertheless that would imply $L_{f_0}$ is already larger than $T$, which is a contradiction. 

\medskip

In the second case, we can proceed almost the same way. Let us define $f_0$ on a countable set $A$ provided by Lemma 2.5. As $Y$ is not compact, there exists a sequence $(y_1, y_2, ...)$ in $Y$ without any accumulation point. Furthermore, as $D\setminus{A}$ is a countable set, we can enumerate its elements, possibly finitely: $(d_1, d_2, ...)$. Let us define the multifunction $F:X\rightarrow{2^{Y}}$ as it follows:

\[
 F(x) =
  \begin{cases} 
      \hfill \{f_0(x)\}    \hfill & \text{ if } x\in{A} \\
      \hfill \{y_i\}       \hfill & \text{ if } x=d_i \\
      \hfill T\cap(\{x\}\times{Y}) \hfill & \text{ if } x\in{X\setminus(A\cup{D})}. \\
  \end{cases}
\]

The steps of the previous case can be repeated to show that we can apply Proposition 2.2 to $F$ without any difficulty, yielding the existence of a Borel-$3$ selection $f$, which is also Baire-$2$ by Proposition 2.3. What is a difference from the previous case, that in the proof of $L_f=T$ we have to take into account those sequences of points of $gr(f)$ which contain infinitely many points above $D\setminus{A}$. However, as the sequence $(y_1, y_2, ...)$ has no accumulation point in $Y$, such a sequence cannot have an accumulation point in $X\times{Y}$, thus $L_f=T$, indeed. \end{proof}

\subsection*{Acknowledgements} 

I am grateful to Zolt\'an Buczolich for his valuable remarks and for the time he spent with proofreading this paper which helped to improve the quality of the exposition.

\end{document}